\theoremstyle{plain}
\newtheorem{thm}{Theorem}[section]
\newtheorem{prp}[thm]{Proposition} 
\newtheorem{lem}[thm]{Lemma} 
\newtheorem{cor}[thm]{Corollary} 
\theoremstyle{definition}
\theoremstyle{remark}
\newtheorem{rmk}{Remark}[section]
\numberwithin{equation}{section}
\newcommand{\N}{\mathbb{N}}
\newcommand{\R}{\mathbb{R}}
\newcommand{\C}{\mathbb{C}}
\newcommand{\Sph}{\mathbb{S}}
\newcommand{\pa}{\partial}
\newcommand{\eps}{\varepsilon}
\newcommand{\jb}[1]{\langle #1 \rangle}
\newcommand{\dal}{\Box}
\newcommand{\ZP}{\N_0}
\DeclareMathOperator{\realpart}{\rm Re}
\DeclareMathOperator{\imagpart}{\rm Im}
\DeclareMathOperator{\supp}{\rm supp}
\title[The energy decay and asymptotics for wave equations]{The energy decay and 
 asymptotics for a class of semilinear wave equations
in two space dimensions}  
\author[S.~Katayama]{Soichiro Katayama}
\address{Department of Mathematics, Wakayama University,
             930 Sakaedani, Wakayama 640-8510, Japan}
\email{katayama@center.wakayama-u.ac.jp}
\author[D.~Murotani]{Daisuke Murotani}
\address{Department of Mathematics, Graduate School of Science, 
             Osaka University, 
             Toyonaka, Osaka 560-0043, Japan.} 
\email{d-murotani@cr.math.sci.osaka-u.ac.jp}
\author[H.~Sunagawa]{Hideaki Sunagawa}
\address{Department of Mathematics, Graduate School of Science, 
             Osaka University, 
             Toyonaka, Osaka 560-0043, Japan.} 
\email{sunagawa@math.sci.osaka-u.ac.jp}
\keywords{Nonlinear wave equations; asymptotic behavior; nonlinear dissipation; energy decay.}
\subjclass{Primary~35L71; Secondary~35B40.}
\begin{document}
\begin{abstract} 
We consider semilinear wave equations with small initial data 
in two space dimensions. 
For a class of wave equations with cubic nonlinearity, we show the
global existence of small amplitude solutions, and 
give an asymptotic description of the solution 
as $t \to \infty$ uniformly in $x \in \R^2$. 
In particular, our result implies the decay of the energy 
when the nonlinearity is dissipative.
\end{abstract}
\maketitle
\section{Introduction}
We consider the Cauchy problem for the following type of semilinear wave equations 
with small data: 
\begin{align}
& \dal  u= F(\pa u), & & (t,x) \in (0, \infty) \times \R^2,
\label{WaveEq}\\
& u(0,x)=\eps f(x), \ \pa_t u(0,x)=\eps g(x), && x\in \R^2,
 \label{InitData}
\end{align}
where $u$ is a real-valued unknown function
of $(t,x)\in [0, \infty)\times \R^2$, $\dal=\pa_t^2-\Delta=\pa_t^2-(\pa_1^2+\pa_2^2)$,
and $\pa u=(\pa_0 u, \pa_1 u, \pa_2 u)$
with the notation
$\pa_0=\pa_t=\pa/\pa t$, and $\pa_{j}=\pa/ \pa x_j$ ($j=1,2$).
We suppose that $f,g \in C_0^{\infty}(\R^2;\R)$.
$\eps$ is a small positive parameter. 
We assume that the nonlinear term $F=F(q)$ is a $C^\infty$ function of $q=(q_0, q_1, q_2)\in \R^3$.

The local existence of classical solutions to \eqref{WaveEq}--\eqref{InitData}
is well known, and we are interested in the sufficient condition
for the global existence of the solutions, and 
also in the asymptotic behavior of global solutions.
We say that the {\it small data global existence} (or SDGE) holds if
for any $f,g\in C^\infty_0(\R^2)$ there is a positive constant $\eps_0$
such that \eqref{WaveEq}--\eqref{InitData} possesses a global solution $u\in C^\infty([0,\infty)\times \R^2)$ for $0<\eps\le \eps_0$.
The case of cubic nonlinearity,
that is to say $F(q)=O(|q|^3)$ near $q=0$, is critical in two space dimensions,
because SDGE holds for some nonlinearity and fails for others.
For example, SDGE does not hold for $F=(\pa_t u)^3$,
however SDGE holds when $F$ is cubic and 
the {\it null condition for cubic terms}
(we refer to it as the {\it cubic null condition}) is satisfied (see Godin~\cite{God93}).
To be more specific, we say that $F$ 
satisfies the cubic null condition if
$F^{(c)}(-1, \omega_1, \omega_2)=0$ for any 
$\omega=(\omega_1, \omega_2)\in \Sph^1$, 
where $F^{(c)}$ denotes the cubic homogeneous part of $F$, that is,
$$
 F^{(c)}(q)=\lim_{\lambda \to +0} \lambda^{-3} F(\lambda q).
$$
The typical example satisfying the cubic null condition is
$$
 F(\pa u)=\sum_{a=0}^2 c_a(\pa_a u)\left(|\pa_t u|^2-|\nabla_x u|^2\right)
$$ 
with arbitrary constants $c_a$.
The null condition was first introduced for systems of 
nonlinear wave equations with quadratic nonlinearity in three space dimensions as a sufficient condition to ensure SDGE (see Klainerman~\cite{Kla86} and
Christodoulou~\cite{Chr86}; note that 
the case of quadratic nonlinearity is critical in three space dimensions).
The terms satisfying the null condition form an important class of 
nonlinearity.
We do not go into details, but 
the global existence under the null condition for the quasi-linear systems, 
even with quadratic nonlinearity, in two space dimensions
is also studied by many authors (see \cite{Ali01:01}, \cite {Ali01:02}, 
\cite{Hos95}, \cite{Kat93}, and \cite{Kat95} for example).
 
Another important class of nonlinearity is the {\it nonlinear dissipation}.
To explain the situation clearly, we consider the following equation
in general space dimensions:
\begin{align}
& \dal  u= -|\pa_t u|^{p-1} (\pa_t u), & & (t,x) \in (0, \infty) \times \R^n,
\label{DisWaveEq}\\
& u(0,x)=\varphi(x), \ \pa_t u(0,x)=\psi(x), && x\in \R^n,
 \label{DisInitData}
\end{align}
where $p>1$.
It is known that there is a global solution $u\in C\bigl([0,\infty); H^1(\R^n)\bigr)\cap C^1\bigl([0,\infty); L^2(\R^n)\bigr)$ for $(\varphi, \psi)\in X_p:=H^2(\R^n)\times(H^1(\R^n)\cap L^{2p}(\R^n))$ (see Lions--Strauss~\cite{LioStr65} for instance). Here no smallness of the data is required.
We define the energy norm by
$$
\|u(t)\|_E^2:=\frac{1}{2}\int_{\R^n} \left(|\pa_t u(t,x)|^2+|\nabla_x u(t,x)|^2\right)dx.
$$
Mochizuki-Motai~\cite{MocMot95} proved that 
if $n\ge 1$ and $1<p\le 1+2/n$, then the energy decays to zero, namely
$\lim_{t\to\infty} \|u(t)\|_E=0$ for initial data belonging to a dense subset
of $X_p$ (see also
Todorova-Yordanov~\cite{TodYor07}). 
On the other hand,  in the case of $n\ge 2$ and $p>1+2/(n-1)$, 
it is also proved in \cite{MocMot95} that the energy does not decay for a 
class of small initial data in $X_p$.
To sum up, the result in \cite{MocMot95} for $n=2$ can be read as follows: 
The energy decays for initial data in a dense subset of $X_p$ if $1<p\le 2$,
while the energy does not decay for small initial data if $p>3$. There is a gap in
the conditions for $p$, and it is quite interesting to investigate what happens
for \eqref{DisWaveEq} with $n=2$ and $p=3$ (or equivalently \eqref{WaveEq} 
with $F(\pa u)=-|\pa_t u|^2\pa_t u$).
 
The result above suggests that SDGE holds for the nonlinearity 
$F(\pa u)=-|\pa_t u|^2\pa_t u$, though it does not satisfy the cubic null 
condition. Hence it is natural to expect that there is a sufficient condition 
for SDGE which is weaker than the cubic null condition 
and includes also the nonlinear dissipative terms. 
Agemi conjectured that the condition 
\begin{equation}
\label{AHK}
F^{(c)}(-1, \omega_1,\omega_2)\ge 0,
\quad \omega=(\omega_1,\omega_2)\in \Sph^1
\end{equation}
implies SDGE. This conjecture was proved to be true
by Hoshiga~\cite{Hos08} and Kubo~\cite{Kub07} independently.
Moreover some asymptotic pointwise behavior of global solutions under 
\eqref{AHK} is obtained in \cite{Kub07} 
(see also Hayashi-Naumkin-Sunagawa~\cite{HayNauSun08} and 
Sunagawa~\cite{Sun06} for related 
results on nonlinear Schr\"odinger equations and nonlinear
Klein-Gordon equations, respectively).
Let us restrict our attention to the case of the nonlinear dissipative term 
$F(\pa u)=-|\pa_t u|^2\pa_t u$.
Then, it follows from \cite{Kub07} that
$$
|\pa u(t,x)|\le Cr^{-1/2} \left(\log \frac{r}{|r-t-1|+1}\right)^{-1/2}, 
\quad r\ge \frac{t}{2}+1
$$
with some positive constant $C$, where $r=|x|$. 
This estimate is improved in \cite{Mur11} as follows:  
$$
|\pa u(t,x)|\le Ct^{-1/2} \min\left\{\left(\log t \right)^{-1/2}, \eps
(1+|r-t|)^{-\vartheta}\right\}
$$
for $(t,x)\in [2,\infty)\times \R^2$ with $0<\vartheta<1/2$. 
However this is still insufficient in order to say something about the decay 
of the energy.

In this paper we will further improve the pointwise estimate of $\pa u$ under 
the condition \eqref{AHK}, and show that the energy decays to zero 
if we consider \eqref{WaveEq} with $F(\pa u)=-|\pa_t u|^2\pa_t u$ 
(or \eqref{DisWaveEq} with $n=2$ and $p=3$ in other words)
for small $C^\infty_0$-data 
(in fact, complex-valued solutions will be considered).
\section{The main result and its applications}
\subsection{Global existence and asymptotic pointwise behavior}
In what follows, we consider the initial value problem 
\eqref{WaveEq}--\eqref{InitData} for complex-valued data with the nonlinearity
\begin{equation}
\label{CExp}
F(\pa u)=
\sum_{a,b,c=0}^2 p_{abc} (\pa_a u)(\pa_b u)(\overline{\pa_c u})
\end{equation}
with complex constants $p_{abc}$ in order that we can catch up two kinds of 
interesting nonlinearities 
$F(\pa u)=-|\pa_t u|^2(\pa_t u)$ and $F(\pa u)=i |\pa_t u|^2 (\pa_t u)$ 
(see Subsections~\ref{App02} and \ref{App03} below). 
Here and hereafter, $\overline{z}$ denotes the complex 
conjugate of $z \in \C$ and the symbol $i$ always stands for $\sqrt{-1}$. 
We also use the notation $\jb{z}=\sqrt{1+|z|^2}$. 
The following theorem is our main result.
\begin{thm}\label{Global}
Assume that \eqref{CExp} is satisfied. We also assume
\begin{equation}
\label{ComplexAHK}
\realpart F(\hat{\omega}) \ge 0, \quad
\hat{\omega}=
(\omega_0,\omega)\in \{-1\}\times \Sph^1.
\end{equation}
Let $\mu$ be a sufficiently small positive constant satisfying $0<\mu<1/10$, 
say. Then, for any $f,g\in C^\infty_0(\R^2;\C)$, there exists a positive 
constant $\eps_0$ such that \eqref{WaveEq}--\eqref{InitData} admits a unique global classical 
solution $u\in C^\infty\bigl([0,\infty)\times\R^2; \C\bigr)$ for any 
$\eps \in (0,\eps_0]$. Moreover, 
there is a function $P_0$ of $(\sigma, \omega)\in \R\times \Sph^1$, 
which satisfies 
\begin{align}
\label{Concl02}
|P_0(\sigma, \omega)|\le C\eps \jb{\sigma}^{\mu-1},\quad (\sigma, \omega)\in \R\times \Sph^1
\end{align}
with a positive constant $C$, such that
\begin{align}
\label{Concl01}
\pa u(t,x)=\hat{\omega}(x) t^{-1/2} P(\log t, |x|-t, |x|^{-1}x)
+O\bigl(\eps t^{4\mu-(3/2)}\jb{t-|x|}^{-3\mu}\bigr)
\end{align}
for any $(t,x)\in [1,\infty)\times \R^2$,
where 
$\hat{\omega}(x)=(-1, |x|^{-1}x_1, |x|^{-1} x_2)$ for $x=(x_1, x_2)\in \R^2$, and 
$P=P(\tau,\sigma, \omega)$ is defined as a solution to
\begin{align}
\begin{cases}
 \pa_{\tau} P(\tau,\sigma, \omega) =-\left({F(\hat{\omega})}/2\right)|P(\tau,\sigma,\omega)|^2 P(\tau,\sigma, \omega), & \tau>0, \\
 P(0,\sigma,\omega)=P_0(\sigma,\omega)& 
\end{cases}
\label{ode}
\end{align}
with $\hat\omega=(-1,\omega)$.
Here the constant $C$ is independent of $\eps$.
\end{thm}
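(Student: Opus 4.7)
The plan is a bootstrap argument combining Klainerman's vector field method with a reduction to an ODE along outgoing null rays. First I would introduce the Poincar\'e generators $\Gamma=\{\pa_a, \Omega_{ij}, L_j, S\}$ and the generalized energy $E_N(t)=\sum_{|\alpha|\le N}\|\pa \Gamma^\alpha u(t,\cdot)\|_{L^2}$ for some large $N$. The bootstrap hypothesis would take the form $E_N(t)\le A\eps(1+t)^{C\mu}$ with absolute constants $A,C$, from which the Klainerman--Sobolev inequality in two space dimensions yields $|\pa \Gamma^\alpha u(t,x)|\lesssim \eps t^{-1/2}\jb{t-|x|}^{-1/2}(1+t)^{C\mu}$ for $|\alpha|\le N-2$. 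In the interior region $|x|<t/2$ the factor $\jb{t-|x|}^{-1/2}\lesssim t^{-1/2}$ yields the improved bound $|\pa u|\lesssim \eps t^{-1}(1+t)^{C\mu}$, so the cubic nonlinearity is absorbed directly by a standard weighted energy estimate and attention focuses on the exterior region $|x|\ge t/2$.

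Second, in the exterior region I would derive the reduced ODE. Using the identity $2(\pa_t-\pa_r)(\pa_t+\pa_r)=\dal+r^{-1}\pa_r - r^{-2}\Delta_\omega$ together with the decomposition of each $\pa_a$ into a radial component along $\hat{\omega}(x)$ and a tangential component, the cubic nonlinearity $F(\pa u)$ contracted against the null frame reduces to $F(\hat{\omega})$ times the cube of the radial derivative, up to remainders gaining at least one factor $\jb{t-|x|}/r$ or coming from tangential derivatives (which enjoy extra decay through the $L_j$ and $\Omega_{ij}$ vector fields). Writing the ansatz $\pa u\sim \hat{\omega}(x)\, t^{-1/2} P(\log t, r-t, \omega)$ and matching leading orders then forces the scalar $P$ to satisfy \eqref{ode} along each outgoing ray, with $\tau=\log t$ and $\sigma=r-t$. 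The initial profile $P_0(\sigma,\omega)$ is extracted by a matching procedure at $t=1$, and \eqref{Concl02} follows from the weighted pointwise bound on $\pa u$ above.

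Third, the Agemi-type condition \eqref{ComplexAHK} gives global solvability of \eqref{ode} with a uniform-in-$\tau$ bound: since $\pa_\tau|P|^2=-\realpart F(\hat\omega)\,|P|^4\le 0$, the map $\tau\mapsto |P(\tau,\sigma,\omega)|$ is non-increasing, so $|P(\tau,\sigma,\omega)|\le |P_0(\sigma,\omega)|$ for all $\tau\ge 0$. The asymptotic formula \eqref{Concl01} then comes from a difference estimate: the remainder $R:=\pa u-\hat{\omega}(x) t^{-1/2} P(\log t, r-t, \omega)$ satisfies a wave equation whose source is strictly smaller than the leading profile thanks to the matching, and propagating $R$ along characteristics with sharp weighted estimates produces the stated remainder bound.

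The main obstacle will be the simultaneous control of the $\jb{\sigma}^{\mu-1}$ decay of the profile in $\sigma$ and the $t^{C\mu}$ growth of the generalized energy: the bookkeeping must be tight enough for the bootstrap to close and for the remainder exponent $t^{4\mu-3/2}\jb{t-|x|}^{-3\mu}$ to be attained, which is the origin of the smallness restriction on $\mu$. A secondary challenge is that only the real part of $F(\hat\omega)$ is assumed nonnegative, so the imaginary part of the nonlinearity drives a phase modulation in $P$ that must be distinguished from amplitude loss; this is handled by tracking $|P|^2$ rather than $P$ itself when applying Gronwall-type estimates to the reduced ODE.
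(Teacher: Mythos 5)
There are two genuine gaps, both at the heart of what the theorem actually asserts. First, your pointwise decay comes from the Klainerman--Sobolev inequality, which in two space dimensions yields at best $|\pa u|\lesssim \eps\, t^{-1/2}\jb{t-|x|}^{-1/2}(1+t)^{C\mu}$. That suffices to close a crude global-existence bootstrap, but the conclusions \eqref{Concl02} and \eqref{Concl01} require the profile and the solution to decay like $\jb{\sigma}^{\mu-1}$ in the null coordinate, i.e.\ almost a \emph{full} power of $\jb{t-|x|}$, and your scheme never produces more than the half power. The paper obtains the full power by a different route: H\"ormander's $L^1$--$L^\infty$ estimate gives $|u(t,x)|_{2k-1}\le C\eps(1+t)^{\mu-1/2}$ (an estimate on $u$ itself, not $\pa u$), Lindblad's inequality $|\pa u|_s\le C\jb{t-r}^{-1}|u|_{s+1}$ converts this into $|\pa u|\lesssim \eps t^{\mu-1/2}\jb{t-r}^{-1}$, and then the ODE along the ray $r-t=\sigma$ is initialized at the time $t_{0,\sigma}\sim\jb{\sigma}$ where that ray enters the region $r\ge t/2$, so that the datum of the ray-ODE already carries the factor $\jb{\sigma}^{\mu-1}$. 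This "careful treatment of the factor $\jb{t-|x|}$" is precisely what the authors identify as their main improvement over Kubo's argument, and without it neither \eqref{Concl02} nor the remainder weight $\jb{t-|x|}^{-3\mu}$ is reachable.

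Second, your construction of $P_0$ by "matching at $t=1$" cannot give \eqref{Concl01}. If you set $P(\log t_0)=z(t_0)$ at a fixed time and run Gronwall on the difference of the two ODEs, the inhomogeneity $J=H(t,(t+\sigma)\omega)$ contributes $\int_{t_0}^{\infty}|J|\,d\tau$, which is small but does \emph{not} decay as $t\to\infty$; you would get $|z(t)-P(\log t)|\lesssim\eps\jb{\sigma}^{\mu-1}t^{C\eps^2}$, i.e.\ a remainder of the same size as the main term, whereas \eqref{Concl01} requires a remainder smaller by $t^{4\mu-1}$. The correct $P_0$ is a scattering datum built from the entire future evolution, $z_+=z_0+\int_{t_0}^{\infty}e^{i\Theta(\tau)}J(\tau)\sqrt{\eta(\tau)}\,d\tau$, so that at late times only the tail $\int_t^{\infty}|J|\lesssim t^{-\rho+1}$ enters the error. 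Moreover, when $\realpart F(\hat\omega)>0$ the modulus of $z$ genuinely decays like $(\log t)^{-1/2}$, and comparing $z$ with a solution of the limit equation \eqref{ode} requires separating the dissipated amplitude from the rotating phase; the paper does this with the substitution $z=\xi/\sqrt{\eta}$, where $\eta'=(\realpart K)|\xi|^2/t$ records the accumulated dissipation and $\xi$ obeys a purely oscillatory equation. Your observation that $\tau\mapsto|P(\tau)|$ is non-increasing establishes global solvability of \eqref{ode} but says nothing about why the PDE solution is asymptotic to \emph{some} solution of \eqref{ode} with a quantitative rate, which is the actual content of Lemma 5.1 in the paper and the part your proposal leaves unproved.
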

%
\begin{rmk}
(1) It is well known that if $f=g=0$ for $|x|\ge R$, then $u(t,x)=0$ for $|x|\ge t+R$
(see H\"ormander~\cite{Hoe97} for instance).
Hence $t$ is equivalent to $\jb{t+|x|}$ in $\supp u(t,\cdot)$ when $t\ge 1$. \\
(2)
$P(\tau,\sigma,\omega)$ in the above theorem can be explicitly solved as
\begin{equation}
\label{ExpS}
 P(\tau,\sigma,\omega)=
 P_0(\sigma,\omega) 
 \frac{
 \exp\left(-i\Theta(\tau,\sigma, \omega) \right)
 }
 {\sqrt{1+\left(\realpart{F}(\hat{\omega})\right) |P_0(\sigma,\omega)|^2\tau}}
\end{equation}
with
\begin{equation}
\label{Phase}
\Theta(\tau,\sigma, \omega)=\frac{1}{2}\left(\imagpart F(\hat{\omega})\right)
  \int_{0}^{\tau} 
 \frac{|P_0(\sigma, \omega)|^2}{{1+\left(\realpart F(\hat{\omega})\right)|P_0(\sigma,\omega)|^2\tau'}}d\tau'.
\end{equation}
From \eqref{ExpS}, we have
\begin{equation}
\label{ExpS1}
|P(\tau,\sigma, \omega)|\le \frac{|P_0(\sigma,\omega)|}{\sqrt{1+\left(\realpart F(\hat{\omega})\right) |P_0(\sigma,\omega)|^2\tau}}.
\end{equation}
(3) We can add higher order nonlinear terms to \eqref{CExp}, 
but the result becomes slightly weaker from the viewpoint of 
the estimate for the remainder term: Theorem~\ref{Global} remains valid if 
we replace \eqref{CExp} by
$$
F(\pa u)
=\sum_{a,b,c=0}^2 p_{abc}(\pa_a u)(\pa_b u)\left(\overline{\pa_c u}\right)
+O(|\pa u|^4)\quad \text{ near $\pa u=0$},
$$
$F$ in \eqref{ComplexAHK} as well as in \eqref{ode} by $F^{(c)}$, 
and $O(\eps t^{4\mu-(3/2)} \jb{t-|x|}^{-3\mu})$ in \eqref{Concl01} by
$O(\eps t^{\mu-1}\jb{t-|x|}^{-1/2})$.
\qed
\end{rmk}
We will prove Theorem~\ref{Global} in Section~\ref{Proof0101}
after some preparation given in Sections~\ref{Preliminary}, \ref{Reduction}
and \ref{LemmaODE}.
Compared to the method by Kubo~\cite{Kub07}, the most different point in our proof is
the choice of the equation \eqref{ode} for the asymptotics. Careful treatment
of the factor $\jb{t-|x|}$ is also quite important in our improvement.

In the following subsections, we discuss what we can see from 
Theorem~\ref{Global} particularly in the cases of 
$F(\pa u)= -|\pa_t u|^2\pa_t u$ (Subsection~\ref{App02}) and $F(\pa u)= i|\pa_t u|^2\pa_t u$ 
(Subsection~\ref{App03}).
\subsection{The case of nonlinear dissipation: The decay of the energy}
\label{App02}
We focus on the case where the inequality in \eqref{ComplexAHK} is strict, 
i.e.,
\begin{equation}
\label{DampingAss}
C_0:=\min_{\omega\in \Sph^1} \realpart F(\hat{\omega})>0.
\end{equation}
The typical example satisfying \eqref{DampingAss} is 
$F(\pa u)=-|\pa_t u|^2(\pa_t u)$.
In this case, it follows from \eqref{Concl02} and \eqref{ExpS1} that
$$
|P(\tau,\sigma, \omega)|\le \min\left\{\frac{1}
{\sqrt{C_0 \tau}}, C\eps \jb{\sigma}^{\mu-1}\right\}.
$$
Hence, by \eqref{Concl01} we can find a positive constant $C$ 
such that
\begin{equation}
\label{DecayDamping}
|\pa u(t,x)|\le C t^{-1/2}\min\left\{(\log t)^{-1/2}, \eps \jb{t-|x|}^{\mu-1}\right\}
\end{equation}
for $(t,x)\in [2,\infty)\times \R^2$.
This estimate says
that $\pa u$ decays like $(t \log t)^{-1/2}$ along the line 
$l_\sigma:=\{(t,x);\, |x|-t=\sigma\}$ for each $\sigma\in \R$. 
On the other hand, for the solution $u_0$ to the free wave 
equation $\dal u_0=0$ with $C^\infty_0$-data, it is known that 
$\pa u_0$ decays at the rate of $t^{-1/2}$ along  $l_\sigma$. 
Hence \eqref{DecayDamping} tells us that $\pa u$ has a gain of 
$(\log t)^{-1/2}$ in the pointwise decay compared to $\pa u_0$. 
Moreover, from \eqref{DecayDamping} we obtain the following decay of the 
energy:
\begin{cor}\label{EnergyDecay}
Suppose that \eqref{CExp} and \eqref{DampingAss} are fulfilled,
and let $0<\mu<1/10$.
Then for any $f, g\in C^\infty_0(\R^2; \C)$, the global solution $u$ to
\eqref{WaveEq}--\eqref{InitData} satisfies
$$
\|u(t)\|_E^2\le C\eps^{\frac{1}{1-\mu}}(\log t)^{-\frac{1-2\mu}{2-2\mu}},\quad t\ge 2
$$
with some positive constant $C$, provided that $\eps$ is sufficiently small.
\end{cor}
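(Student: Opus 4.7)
The plan is to combine the pointwise bound \eqref{DecayDamping} with the finite propagation speed recorded in Remark~2.2(1). Since $f, g \in C_0^\infty(\R^2;\C)$, there exists $R > 0$ such that $u(t,\cdot)$ is supported in $\{x : |x| \le t + R\}$. Passing to polar coordinates and using $r \le t + R \le Ct$ for $t \ge 2$, I would first obtain
\begin{align*}
\|u(t)\|_E^2
&\le C \int_{\Sph^1} \int_0^{t+R} |\pa u(t, r\omega)|^2 \, r \, dr \, d\omega \\
&\le C \int_{-R}^{t} \min\bigl\{(\log t)^{-1},\ \eps^2 \jb{s}^{2\mu-2}\bigr\}\, ds,
\end{align*}
where the last inequality uses the square of \eqref{DecayDamping} together with the change of variable $s = t - r$; the factor $r/t$ is absorbed into $C$.

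Next, I would split the one-dimensional integral at the threshold $s_0 := (\eps^2 \log t)^{1/(2-2\mu)}$, at which the two entries of the minimum coincide. For $\eps$ sufficiently small and $t \ge 2$ one has $s_0 \in [0, t]$, so the split is genuine. On $|s| \le s_0$ the minimum equals $(\log t)^{-1}$, contributing
$$
\frac{2 s_0}{\log t} = 2\, \eps^{\frac{1}{1-\mu}} (\log t)^{-\frac{1-2\mu}{2-2\mu}}.
$$
On the outer region the minimum equals $\eps^2 \jb{s}^{2\mu-2}$, which is integrable because $2\mu - 2 < -1$, and
$$
\int_{s_0}^{\infty} \eps^2 \jb{s}^{2\mu-2}\, ds \le \frac{\eps^2}{1-2\mu}\, s_0^{2\mu-1} = C\, \eps^{\frac{1}{1-\mu}} (\log t)^{-\frac{1-2\mu}{2-2\mu}},
$$
after simplifying via the identity $2 + (2\mu-1)/(1-\mu) = 1/(1-\mu)$. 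The contribution from $s \in [-R, 0]$ is at most $O(\eps^2)$, which is dominated by the main term since $1/(1-\mu) < 2$ when $\mu < 1/2$.

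The step that requires the most care is simply the exponent bookkeeping: one must verify that the threshold $s_0$ chosen to balance the two parts of the minimum produces identical powers of $\eps$ and $\log t$ in both partial integrals, and that these powers match those stated in the corollary. Conceptually, the proof is immediate from \eqref{DecayDamping} and the finite speed of propagation; no further input from the global existence theorem is needed at this stage.
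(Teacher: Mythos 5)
Your argument is essentially the paper's own: both proofs split the energy integral at the radius where the two entries of the minimum in \eqref{DecayDamping} balance (your $s_0$ is exactly the paper's $m_\eps(t)=\eps^{1/(1-\mu)}(\log t)^{1/(2-2\mu)}$), and your exponent bookkeeping for the two partial integrals is correct. The one spot where your write-up does not quite hold together is the shell $t\le |x|\le t+R$, i.e.\ $s\in[-R,0]$: you dismiss its contribution as $O(\eps^2)$ and claim this is dominated by the main term because $1/(1-\mu)<2$. That comparison only controls the power of $\eps$; a quantity of size $\eps^2$ uniform in $t$ is \emph{not} dominated by $\eps^{1/(1-\mu)}(\log t)^{-(1-2\mu)/(2-2\mu)}$ as $t\to\infty$. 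The gap is cosmetic and closes with no new idea: on $[-R,0]$ the integrand is $\min\{(\log t)^{-1},\eps^2\jb{s}^{2\mu-2}\}\le\min\{(\log t)^{-1},\eps^2\}\le \eps^{1/(1-\mu)}(\log t)^{-(1-2\mu)/(2-2\mu)}$ by the interpolation $\min\{a,b\}\le a^{\theta}b^{1-\theta}$ with $\theta=(1-2\mu)/(2-2\mu)$ --- equivalently, just run your inner/outer split over all of $[-R,t]$ instead of excising $[-R,0]$, since that interval lies in one of the two regions anyway. (The paper sidesteps the issue by replacing $\jb{t-r}$ with the comparable weight $R+|t-r|$, which places the balancing radius at $r=t+R-m_\eps(t)$ and leaves no leftover shell to treat separately.)
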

The proof for this result will be given in Section~\ref{Omake}.
This corollary says that the energy non-decay result of 
Mochizuki-Motai~\cite{MocMot95} fails for $n=2$ and $p=3$.
It also suggests that the energy decay result in \cite{MocMot95}
holds also for $n=2$ and $2< p\le 3$, but this is still an open problem 
(note that the energy decay for $n=2$ and $p=3$ established 
here is only for small $C^\infty_0$-data).

\subsection{The case without dissipation: Logarithmic correction of the phase.}
\label{App03}
We say that the global solution $u$ to \eqref{WaveEq}--\eqref{InitData} 
is asymptotically free (in the energy sense) if there is $(\varphi_+, \psi_+)
\in \dot{H}^1(\R^2)\times L^2(\R^2)$ such that
$$
\lim_{t\to \infty} \|u(t)-u_+(t)\|_E=0,
$$
where $u_+$ is the solution to the free wave equation
$\dal u_+=0$ in the energy class with initial data 
$(u_+,\pa_t u_+)=(\varphi_+,\psi_+)$ at $t=0$. Here $\dot{H}^1(\R^2)$
is the completion of $C^\infty_0(\R^2)$ with respect to the norm
$\|\phi\|_{\dot{H}^1(\R^2)}=\|\nabla\phi\|_{L^2(\R^2)}$.
It is proved in Katayama~\cite{Kat11b} (see also \cite{Kat11a}) 
that $u$ is asymptotically free 
if and only if there is $U=U(\sigma,\omega)\in L^2(\R\times \Sph^1)$
such that
$$
\lim_{t\to\infty} \bigl\|\pa u(t,\cdot)-\hat{\omega}(\cdot)\widetilde{U}(t,\cdot)
\bigr\|_{L^2(\R^2)}=0,
$$
where $\hat{\omega}(x)=(-1, x_1/|x|, x_2/|x|)$ and 
$\widetilde{U}(t,x)=|x|^{-1/2} U(|x|-t, x/|x|)$.

Now we consider the case of
\begin{equation}
\realpart F(\hat{\omega})=0,\quad \omega\in \Sph^1,
\end{equation}
which is stronger than \eqref{ComplexAHK} but weaker than the cubic null 
condition. In this case, $P$ can be written as
\begin{equation}\label{ExpSS}
P(\tau, \sigma, \omega)=P_0(\sigma, \omega) \exp\left(
-i\widetilde\Theta(\sigma, \omega) \tau\right)
\end{equation}
with $\widetilde\Theta(\sigma, \omega)=
\left(\imagpart F(\hat\omega)\right)|P_0(\sigma, \omega)|^2/2(\in \R)$.
Hence we find from \eqref{Concl01} that $\pa u$ decays at the same rate of $t^{-1/2}$ as the 
derivatives of the free solution $u_0$ along the line $l_\sigma$ for each $\sigma$. 
By \eqref{Concl02}, \eqref{Concl01}, and \eqref{ExpSS}, we get
\begin{align}
\bigl\|
\pa u(t,\cdot)-\hat{\omega}(\cdot)\widetilde{P}(t, \cdot)
\bigr\|_{L^2(\R^2)}
\le & C\eps t^{2\mu-(1/2)}
\bigl\||\cdot|^{-1/2}\jb{t-|\cdot|}^{-\mu-(1/2)}\bigr\|_{L^2(\R^2)}
\nonumber\\
\le & C\eps t^{2\mu-(1/2)}\to 0\quad (t\to \infty),
\end{align}
where $\widetilde{P}(t, x)=|x|^{-1/2}P(\log t, |x|-t, x/|x|)$.
Moreover \eqref{Concl02} and \eqref{ExpSS} lead to
$$
\lim_{t\to\infty}\|P(\log t,\cdot,\cdot)\|_{L^2((-\infty, -t)\times \Sph^1)}=0.
$$
Therefore
we find that $u$ is asymptotically free
if and only if there is $U=U(\sigma,\omega)\in L^2(\R\times\Sph^1)$ such that
$$
\lim_{\tau\to\infty} \|P(\tau,\cdot,\cdot)-U(\cdot,\cdot)\|_{L^2(\R\times\Sph^1)}=0. 
$$

If we assume $\imagpart F(\hat{\omega})=0$ 
for any $\omega\in \Sph^1$ in addition,
then $P(\tau,\sigma,\omega)=P_0(\sigma, \omega)$.
Furthermore we see from \eqref{Concl02} that $P_0\in L^2(\R\times \Sph^1)$.
Hence we conclude that the solution $u$ is asymptotically free.
For this case, we have $F(\hat{\omega})\equiv0$ and the cubic null condition 
(for the complex case) is satisfied.
Typical examples are $F=(\pa_a u)\bigl(|\pa_t u|^2-|\pa_1u|^2-|\pa_2u|^2\bigr)$ or $(\overline{\pa_a u})\bigl((\pa_t u)^2-(\pa_1u)^2-(\pa_2u)^2\bigr)$,
as well as $F=(\pa_a u)\bigl((\pa_b u)(\overline{\pa_c u})-(\pa_c u)(\overline{\pa_b u})\bigr)$ for $a,b,c=0,1,2$.

The situation is different if $\imagpart F(\hat\omega)\not\equiv 0$.
For example, take $F(\pa u)=i|\pa_t u|^2(\pa_t u)$ so that 
$F(\hat{\omega})\equiv -i$. 
Then it is easy to show $\|u(t)\|_E=\|u(0)\|_E$. 
By \eqref{ExpSS} we get
$$
P(\tau,\sigma,\omega)=P_0(\sigma,\omega)\exp\left(\frac{i}{2}|P_0(\sigma,\omega)|^2\tau \right),
$$
and we can easily see that $P(\tau)$ does not converge to any function 
in $L^2(\R\times \Sph^1)$ as $\tau\to \infty$ unless $P_0\equiv 0$.
Because of the conservation of the energy, 
we can show that if $P_0\equiv 0$ then $(f,g)\equiv (0,0)$.
Hence we see that the global solution $u$ for small $\eps$ 
is not asymptotically free unless $(f, g)\equiv (0,0)$,
though the energy is preserved.
Such a phenomenon never occurs in the real-valued case.
\section{Preliminaries}\label{Preliminary}
We introduce
\begin{align*}
S:=& t\pa_t+\sum_{j=1}^2 x_j \pa_j
\\
L=&(L_1,L_2):=(t\pa_1+x_1\pa_t, t\pa_2+x_2\pa_t),
\ \Omega:= x_1\pa_2-x_2\pa_1,
\end{align*}
and we set
$$
 \Gamma=(\Gamma_0,\Gamma_1, \ldots, \Gamma_6)=(S, L_1, L_2, \Omega, \pa_0,\pa_1,\pa_2).
$$
With a multi-index $\alpha=(\alpha_0, \alpha_1, \ldots, \alpha_6)\in \ZP^7$, we write
$\Gamma^\alpha=\Gamma_0^{\alpha_0}\Gamma_1^{\alpha_1}\cdots \Gamma_6^{\alpha_6}$,
where $\ZP$ denotes the set of nonnegative integers.
For a smooth function $\psi=\psi(t,x)$ and a nonnegative integer $s$, we define
$$
|\psi(t,x)|_s=\sum_{|\alpha|\le s} |\Gamma^\alpha \psi(t,x)|,
\quad \|\psi(t)\|_s=\sum_{|\alpha|\le s} \|\Gamma^\alpha \psi(t,\cdot)\|_{L^2(\R^2)}.
$$
It is easy to see that $[\dal, \Omega]=[\dal, L_j]=[\dal, \pa_a]=0$
for $j=1,2$ and $a=0,1,2$, where $[A,B]=AB-BA$ for the operators $A$ and $B$.
We also have $[\dal, S]=2\dal$. Therefore for any $\alpha=(\alpha_0,\alpha_1,\ldots,\alpha_6)\in \ZP^7$ and a smooth function $\psi$, 
we have
\begin{equation}
\label{Comm01}
\dal \Gamma^\alpha \psi=(\Gamma_0+2)^{\alpha_0}\Gamma_1^{\alpha_1}\cdots\Gamma_6^{\alpha_6} \dal \psi=: \widetilde\Gamma^\alpha \dal \psi.
\end{equation}
We can check that we have $[\Gamma_a, \Gamma_b]=\sum_{0\le c\le 7} A^{ab}_c\Gamma_c$
and $[\Gamma_a, \pa_b]=\sum_{0\le c\le 2} B^{ab}_c \pa_c$ with appropriate constants
$A^{ab}_c$ and $B^{ab}_c$. Hence for any $\alpha, \beta\in \ZP^7$,
and any nonnegative integer $s$, there exist positive constants $C_{\alpha,\beta}$ 
and $C_s$ such that
\begin{align}
\label{Comm02}
& |\Gamma^\alpha\Gamma^\beta\psi(t,x)|\le C_{\alpha,\beta} |\psi(t,x)|_{|\alpha|+|\beta|},\\
\label{Comm03}
& C_s^{-1} |\pa \psi(t,x)|_s\le \sum_{0\le a\le 2}\sum_{|\gamma|\le s} |\pa_a\Gamma^\gamma\psi(t,x)|\le C_s|\pa \psi(t,x)|_s
\end{align}
for any smooth function $\psi$.

Using these vector fields, we obtain a good decay estimate for the
solution to the inhomogeneous wave equation. Let $0<T\le \infty$.
\begin{lem}[H\"ormander's $L^1$--$L^\infty$ estimate]\label{Ho}
Let $v$ be a smooth solution to
$$
\dal v(t,x)=\Psi(t,x), \quad (t,x)\in (0, T)\times \R^2
$$
with initial data $v=\pa_t v=0$ at $t=0$. 
Then there exists a universal positive constant $C$ such that
$$
\jb{t+|x|}^{1/2}|v(t,x)|\le C\int_0^t
\left(\int_{\R^2} \frac{|\Psi(\tau, y)|_1}{\jb{\tau+|y|}^{1/2}}dy\right) d\tau,
\quad 0\le t<T.
$$
\end{lem}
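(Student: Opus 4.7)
The approach I would take starts from the 2D representation formula for $v$ obtained by the method of descent from 3D Kirchhoff's formula:
\[
v(t,x) = \frac{1}{2\pi}\iint_{D}\frac{\Psi(\tau,y)}{\sqrt{(t-\tau)^2 - |y-x|^2}}\,dy\,d\tau, \qquad D := \{(\tau,y) : 0\le\tau\le t,\ |y-x|\le t-\tau\}.
\]
This reduces the estimate to a weighted bilinear bound on the retarded kernel. The geometric identity $t+|x| \le (\tau+|y|) + (t-\tau+|y-x|)$, a trivial consequence of the triangle inequality $|x|\le|y|+|y-x|$, naturally partitions $D$ into two regions: $A = \{\tau+|y|\ge(t+|x|)/2\}$ and $B = \{t-\tau+|y-x|\ge(t+|x|)/2\}$.

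On $A$, the weight $\jb{t+|x|}^{1/2}$ is absorbed directly into $\jb{\tau+|y|}^{1/2}$. After polar coordinates $y-x=r\omega$ and the substitution $r=(t-\tau)\sin\theta$ with $\theta\in[0,\pi/2]$, the singular measure $r\,dr/\sqrt{(t-\tau)^2-r^2}$ becomes the bounded measure $(t-\tau)\sin\theta\,d\theta$, and the contribution is bounded by $\iint|\Psi|/\jb{\tau+|y|}^{1/2}$ with no derivative of $\Psi$ required.

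On $B$, factoring $(t-\tau)^2-|y-x|^2 = (t-\tau-|y-x|)(t-\tau+|y-x|)$ and using $\sqrt{t-\tau+|y-x|}\ge\jb{t+|x|}^{1/2}/\sqrt{2}$ extracts the outer weight, reducing the task to bounding $\iint_B|\Psi|/\sqrt{t-\tau-|y-x|}$ by a multiple of $\iint|\Psi|_1/\jb{\tau+|y|}^{1/2}$. Integrating by parts in the $(\tau,y)$-variable transverse to the cone $t-\tau-|y-x|=0$ converts $1/\sqrt{t-\tau-|y-x|}$ into its antiderivative $\sqrt{t-\tau-|y-x|}$ and transfers one derivative onto $\Psi$. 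That derivative, aligned with the outgoing null direction from $(\tau,y)$, satisfies the key identity
\[
(\tau+|y|)\bigl(\pa_\tau + (y/|y|)\cdot\nabla_y\bigr) = S + (y/|y|)\cdot L,
\]
so it is controlled by $\Gamma\Psi/(\tau+|y|)$, producing the factor $\jb{\tau+|y|}^{-1/2}$ required on the right.

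The main obstacle is the bookkeeping in region $B$: one must check that the integration by parts produces only absorbable boundary terms --- the cone boundary $|y-x|=t-\tau$ is killed by the vanishing antiderivative, while the boundary contribution at $\tau=0$ must be handled by pairing $\sqrt{t-|y-x|}\,\Psi(0,y)$ with the weight dichotomy --- and that the decomposition of the transverse derivative into $\Gamma$-derivatives genuinely has bounded coefficients uniformly in $(\tau, y)\in B$. Once these identities are established, the $L^1$--$L^\infty$ estimate follows by a routine integration.
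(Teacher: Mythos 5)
The paper does not prove this lemma at all --- it is quoted directly from H\"ormander \cite{Hoe88} --- so the only question is whether your sketch would actually work, and I do not believe it does. The first problem is region $A$. The backward light cone $\{|y-x|=t-\tau\}$, where the kernel $((t-\tau)^2-|y-x|^2)^{-1/2}$ is singular, lies \emph{inside} $A$: for $x=0$ every point of that cone satisfies $\tau+|y|=t=t+|x|$. The substitution $r=(t-\tau)\sin\theta$ does not remove the singularity; the measure $(t-\tau)\sin\theta\,d\theta$ is literally $r\,dr/\sqrt{(t-\tau)^2-r^2}$ rewritten, and it is not dominated by a bounded multiple of $dy$. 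Concretely, taking $x=0$ and $\Psi$ the indicator of $\{t/4\le\tau\le t/4+1,\ t-\tau-\delta\le|y|\le t-\tau\}$, your claimed region-$A$ bound has left-hand side of order $t\sqrt{\delta}$ and right-hand side of order $\sqrt{t}\,\delta$, so the ratio blows up as $\delta\to 0$. A derivative of $\Psi$ transverse to the cone is unavoidable in region $A$ too --- that is exactly why the $2$D estimate costs one derivative.

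The second problem is the transversality you defer to ``bookkeeping'' in region $B$: it genuinely fails. Your field $V=\pa_\tau+\hat y\cdot\nabla_y=(S+\hat y\cdot L)/(\tau+|y|)$ applied to $u=t-\tau-|y-x|$ gives $Vu=-1+\widehat{(x-y)}\cdot\hat y$, which vanishes precisely when $y$ lies on the spatial ray from the origin through $x$ (with $|y|<|x|$), a set that region $B$ contains (e.g.\ $\tau$ small, $y$ near the origin). So $V$ is \emph{tangent} to the singular cone there, the factor $1/(Vu)$ produced by your integration by parts is unbounded, and the argument does not close; this degeneracy is the central difficulty of the lemma, not a routine check. (There is also a weight mismatch: the integrated-by-parts term $\sqrt{t-\tau-|y-x|}\,|\Gamma\Psi|/(\tau+|y|)$ is not bounded by $|\Psi|_1/\jb{\tau+|y|}^{1/2}$, since $t-\tau-|y-x|$ can be of order $t$ while $\tau+|y|$ is of order $1$.) H\"ormander's actual proof avoids both issues by decomposing the source dyadically in $\tau+|y|$, rescaling each piece to unit size (under which the $\Gamma$ fields and the equation are invariant and the weights transform correctly), and proving the unit-scale estimate using the full family of boosts rather than the single combination $S+\hat y\cdot L$; if you want a self-contained proof you should follow that route, or simply cite \cite{Hoe88} as the authors do.
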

See H\"ormander \cite{Hoe88} for the proof (similar estimates for arbitrary 
space dimensions are also available there). 

The vector fields in $\Gamma$ also play an important role
in the reduction of the analysis of semilinear wave equations
to that of the corresponding ordinary differential equations (or ODEs in short).
We use the polar coordinates $(r,\theta)$
to write $x=(x_1,x_2)=(r\cos \theta, r\sin \theta)$
with $r=|x|$ and $\theta\in \R$. 
We put $\omega=(\omega_1,\omega_2)=x/|x|=(\cos\theta, \sin\theta)$. 
Then we have $\pa_r=\sum_{j=1}^2\omega_j\pa_j$, and $\pa_\theta=x_1\pa_2-x_2\pa_1=\Omega$.
We also have
\begin{equation}
\pa_1=\omega_1\pa_r-\frac{\omega_2}{r}\pa_\theta,\quad \pa_2=\omega_2\pa_r+\frac{\omega_1}{r}\pa_\theta.
\label{f001} 
\end{equation}
We put
$$
\pa_\pm=\pa_t\pm \pa_r.
$$
Then, for a smooth function $\psi$, we get
\begin{equation}
\label{pcd}
r^{1/2}\dal \psi
=\pa_+\pa_-(r^{1/2}\psi)-r^{-3/2}\left(\pa_\theta^2\psi+\frac{1}{4}\psi\right).
\end{equation}

Since
$$
\frac{1}{r}\pa_\theta=\frac{1}{r}(x_1\pa_2-x_2\pa_1)=\frac{1}{t}(\omega_1 L_2-\omega_2 L_1),
$$
it follows from \eqref{f001} that
\begin{equation}
\label{L202}
\sum_{j=1}^2|\pa_j\psi(t,x)-\omega_j\pa_r\psi(t,x)|\le \left|\frac{1}{r}\pa_\theta\psi(t,x)\right|
\le C\jb{t+r}^{-1}|\psi(t,x)|_1
\end{equation}
for any $\psi\in C^1([0, T)\times \R^2)$.

We put $\omega_0=-1$ and $\hat\omega=(\omega_0,\omega_1,\omega_2)$. 
For simplicity of exposition, we introduce
$$
D_\pm=\pm \frac{1}{2} \pa_{\pm}=\frac{1}{2} (\pa_r\pm \pa_t).
$$
\begin{lem}\label{Rewrite01}
There exists a positive constant $C$ such that
\begin{equation}
\label{f203}
 |\pa \psi(t,x)-\hat \omega D_-\psi(t,x)|
 \le C \jb{t+r}^{-1}|\psi(t,x)|_1 
\end{equation}
for $\psi\in C^1([0, T)\times\R^2)$ and 
$(t,x)\in [0, T)\times (\R^2\setminus\{0\})$. 
\end{lem}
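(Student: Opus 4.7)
The plan is to verify the bound componentwise. Since $\hat\omega = (-1, \omega_1, \omega_2)$ and $D_-\psi = \pa_r\psi - (1/2)\pa_+\psi$ with $\pa_+ = \pa_t + \pa_r$, the zeroth component becomes $\pa_t\psi + D_-\psi = (1/2)\pa_+\psi$, and for $j=1,2$ one has $\pa_j\psi - \omega_j D_-\psi = (\pa_j\psi - \omega_j\pa_r\psi) + (\omega_j/2)\pa_+\psi$. The tangential piece $\pa_j\psi - \omega_j\pa_r\psi$ is already controlled by \eqref{L202}, so the entire lemma reduces to proving the single pointwise estimate $|\pa_+\psi(t,x)| \le C\jb{t+r}^{-1}|\psi(t,x)|_1$.

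For that estimate I would exploit the algebraic identity $S + \sum_{j=1}^2 \omega_j L_j = (t+r)\pa_+$, which is immediate from $S = t\pa_t + r\pa_r$ and $\sum_j \omega_j L_j = t\pa_r + r\pa_t$. In the region $t+r \ge 1$ this identity yields $|\pa_+\psi| \le (t+r)^{-1}\bigl(|S\psi| + \sum_j|L_j\psi|\bigr) \le C(t+r)^{-1}|\psi|_1 \le C'\jb{t+r}^{-1}|\psi|_1$, since $S$ and the $L_j$ lie in $\Gamma$. In the bounded complementary region $0 < t+r \le 1$, the factor $\jb{t+r}^{-1}$ is bounded from below by a positive constant, and $|\pa_+\psi| \le |\pa_t\psi| + |\pa_1\psi| + |\pa_2\psi| \le C|\psi|_1$ follows directly from $\pa_r = \sum_j \omega_j\pa_j$ together with \eqref{Comm03}.

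The only subtlety is that the identity $\pa_+ = (t+r)^{-1}\bigl(S + \sum_j \omega_j L_j\bigr)$ carries a potentially singular factor at the spacetime origin, so the bound with $\jb{t+r}^{-1}$ cannot be read off from it directly; the two-region split described above is the way around this. Combining the three componentwise estimates yields $|\pa\psi - \hat\omega D_-\psi| \le C\jb{t+r}^{-1}|\psi|_1$ on $[0,T)\times(\R^2\setminus\{0\})$, as claimed.
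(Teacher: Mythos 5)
Your proof is correct and follows essentially the same route as the paper's: both reduce the claim to the identity $2(t+r)D_+ = S + \sum_j \omega_j L_j$ for the radial/temporal part together with \eqref{L202} for the tangential part. Your explicit two-region split to replace $(t+r)^{-1}$ by $\jb{t+r}^{-1}$ near the spacetime origin is a small point of extra care that the paper leaves implicit, but it is not a different argument.
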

%
\begin{proof}
Because $S=t\pa_t+r\pa_r$,
if we put $L_r=t\pa_r+r\pa_t=\sum_{j=1}^2 \omega_j L_j$, then we get
$$
D_+=\frac{1}{2}(\pa_r+\pa_t)=\frac{1}{2(t+r)} (S+L_r).
$$
Since we have $\pa_t=-D_-+D_+$ and $\pa_r=D_-+D_+$, we find
that $|\pa_t \psi(t,x)-\omega_0 D_-\psi(t,x)|$ and $|\pa_r \psi(t,x)-D_-\psi(t,x)|$
are bounded by $C \jb{t+r}^{-1} |\psi(t,x)|_1$.
Hence we obtain \eqref{f203} with the help of \eqref{L202}. 
\end{proof}

We define
$$
\Lambda_T:=\{(t,x)\in [0, T)\times \R^2; r\ge t/2\ge 1\}.
$$
Observe that we have $r^{-1}\le 4(1+t+r)^{-1}$ for $(t,x)\in \Lambda_T$.
We compute 
$D_-(r^{1/2}\psi)=r^{1/2}D_-\psi+r^{-1/2}\psi/4$. 
Thus from \eqref{f203} we obtain the following.
\begin{cor}\label{Rewrite02}
There is a positive constant $C$ such that
$$ 
 |r^{1/2}\pa \psi(t,x)-\hat\omega D_-\bigl(r^{1/2}\psi(t,x)\bigr)|
\le C \jb{t+r}^{-1/2}|\psi(t,x)|_1
$$
for $(t,x)\in \Lambda_T$ and $\psi\in C^1([0, T)\times \R^2)$.
\end{cor}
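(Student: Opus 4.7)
My plan is to deduce this corollary directly from Lemma~\ref{Rewrite01} by inserting the scaling factor $r^{1/2}$ and absorbing the resulting commutator term via the restriction $(t,x)\in \Lambda_T$. The starting point is the product-rule identity stated immediately before the corollary, namely $D_-(r^{1/2}\psi) = r^{1/2}D_-\psi + \tfrac{1}{4}r^{-1/2}\psi$, which I would rewrite as
\begin{equation*}
r^{1/2}\pa\psi - \hat\omega D_-\bigl(r^{1/2}\psi\bigr)
= r^{1/2}\bigl(\pa\psi - \hat\omega D_-\psi\bigr) - \tfrac{1}{4}\hat\omega\, r^{-1/2}\psi.
\end{equation*}
This identity reduces the claim to bounding the two terms on the right separately.

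For the first term I would invoke Lemma~\ref{Rewrite01}, which gives $|\pa\psi - \hat\omega D_-\psi|\le C\jb{t+r}^{-1}|\psi|_1$; multiplying by $r^{1/2}$ and using the elementary estimate $r^{1/2}\le (t+r)^{1/2}\le \jb{t+r}^{1/2}$, this contribution is controlled by $C\jb{t+r}^{-1/2}|\psi|_1$. For the second term, the vector $\hat\omega$ is uniformly bounded, $|\psi|\le |\psi|_1$ trivially, and the factor $r^{-1/2}$ is the only place where the hypothesis $(t,x)\in \Lambda_T$ genuinely enters: the paragraph preceding the statement records $r^{-1}\le 4(1+t+r)^{-1}\le 4\jb{t+r}^{-1}$ on $\Lambda_T$, hence $r^{-1/2}\le 2\jb{t+r}^{-1/2}$ there.

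Summing the two estimates yields the asserted inequality. There is essentially no obstacle here: the substantive content lies entirely in Lemma~\ref{Rewrite01}, and the role of $\Lambda_T$ is precisely to upgrade $r^{-1/2}$ to $\jb{t+r}^{-1/2}$, an inequality which would otherwise blow up at the spatial origin. The whole argument amounts to a one-line algebraic decomposition followed by two elementary bounds.
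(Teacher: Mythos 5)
Your proposal is correct and is exactly the paper's argument: the paper proves the corollary by the same product-rule identity $D_-(r^{1/2}\psi)=r^{1/2}D_-\psi+\tfrac14 r^{-1/2}\psi$ combined with Lemma~\ref{Rewrite01} and the bound $r^{-1}\le 4(1+t+r)^{-1}$ on $\Lambda_T$. Nothing is missing.
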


The following lemma is due to Lindblad \cite{Lin90}.
\begin{lem}\label{Rewrite03}
For any nonnegative integer $s$, there exists a positive constant $C_s$ such that
\begin{equation}
\label{L201}
|\pa \psi(t,x)|_s\le C_s \jb{t-r}^{-1}|\psi(t,x)|_{s+1},\quad (t,x)\in [0, T)\times \R^2
\end{equation}
for any $\psi\in C^{s+1}([0, T)\times \R^2)$.
\end{lem}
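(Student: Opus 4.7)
The plan is to reduce the estimate to a purely algebraic fact: each Cartesian derivative can be written as a $\Gamma$-combination with coefficients controlled by $(t^2-r^2)^{-1}$, and then to split by whether $|t-r|$ is bounded or bounded away from zero. It suffices to prove the case $s=0$; the general case follows by applying the $s=0$ bound to $\Gamma^\gamma \psi$ with $|\gamma|\le s$ and invoking \eqref{Comm02}, \eqref{Comm03}.

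First I would verify by direct computation the Cartesian identities
\begin{align*}
(t^2-r^2)\pa_t &= tS - x_1 L_1 - x_2 L_2,\\
(t^2-r^2)\pa_1 &= tL_1 - x_1 S + x_2 \Omega,\\
(t^2-r^2)\pa_2 &= tL_2 - x_2 S - x_1 \Omega,
\end{align*}
which, unlike the polar version obtained from \eqref{f001} and the decomposition $\pa_{\pm}$, are smooth across $r=0$. These are obtained by expanding each right-hand side and cancelling the $t \pa_t$ and cross terms.

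Next I split into two regions. If $|t-r|\le 1$, then $\jb{t-r}\le \sqrt 2$, and since each $\pa_a$ is itself among the vector fields in $\Gamma$ we have $|\pa\psi(t,x)|\le C|\psi(t,x)|_1 \le C\sqrt 2\,\jb{t-r}^{-1}|\psi(t,x)|_1$. If $|t-r|\ge 1$, then $t+r\ge|t-r|\ge 1$ (since $t,r\ge 0$), so $t,|x_j|,r$ are all bounded by $t+r$; dividing the identities above by $(t-r)(t+r)$ yields
$$
|\pa_a \psi(t,x)|\le \frac{C(t+r)}{|t-r|(t+r)}|\psi(t,x)|_1 = \frac{C}{|t-r|}|\psi(t,x)|_1,
$$
and since $|t-r|\ge 1$ gives $\jb{t-r}\le\sqrt 2\,|t-r|$, we conclude the $s=0$ estimate in this region as well.

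Finally, for $s\ge 1$, applying the $s=0$ estimate to $\Gamma^\gamma \psi$ for $|\gamma|\le s$ produces $|\pa \Gamma^\gamma\psi(t,x)|\le C\jb{t-r}^{-1}|\Gamma^\gamma \psi(t,x)|_1$; summing over $|\gamma|\le s$ and using \eqref{Comm02} to absorb $\Gamma^\beta\Gamma^\gamma$ into $|\psi|_{|\beta|+|\gamma|}$, together with \eqref{Comm03} to identify the resulting sum with $|\pa\psi|_s$, yields the claimed bound. The only subtle point is essentially the algebraic step at the beginning; once the clean Cartesian identities are isolated, the case split is immediate and the inductive step is routine.
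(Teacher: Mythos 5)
Your proof is correct and takes essentially the same approach as the paper: both rest on the algebraic identity expressing $(t^2-r^2)\pa_a$ as a combination of $S$, $L_1$, $L_2$, $\Omega$ with coefficients of size $O(t+r)$, followed by the reduction of $s\ge 1$ to $s=0$ via \eqref{Comm02} and \eqref{Comm03}. The only cosmetic difference is that you use the Cartesian identities directly (they check out), whereas the paper works with the radial field $L_r=t\pa_r+r\pa_t$ and then invokes \eqref{L202} to recover $\pa_1,\pa_2$ from $\pa_r$; your case split between $|t-r|\le 1$ and $|t-r|\ge 1$ correctly converts $|t-r|^{-1}$ into $\jb{t-r}^{-1}$.
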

%
\begin{proof}
In view of \eqref{Comm02} and \eqref{Comm03}, it suffices
to prove \eqref{L201} for $s=0$. As in the proof of Lemma~\ref{Rewrite01},
we put $L_r=t\pa_r+r\pa_t$. Then we have
$$
(t-r)\pa_t\psi=\frac{1}{t+r}\left(tS-rL_r\right)\psi,\quad (t-r)\pa_r\psi=\frac{1}{t+r}
\left(tL_r-rS\right)\psi.
$$
Hence we get $\jb{t-r}(|\pa_t\psi|+|\pa_r\psi|)\le C|\psi|_1$, which leads to
\eqref{L201} for $s=0$ with the help of \eqref{L202}. 
\end{proof}
\section{Reduction to simplified equations}
\label{Reduction}
Let $0<T\le \infty$, and let  $u$ be the solution to \eqref{WaveEq}
on $[0, T)\times \R^2$.
We suppose that 
\begin{equation}
\label{Supp01}
 \supp f\cup \supp g \subset B_R 
\end{equation}
for some $R>0$, where $B_M=\{x\in \R^2;|x|\le M\}$ for $M>0$. Then, from the finite propagation property,
we have
\begin{equation}
\label{Supp02}
 \supp u(t,\cdot)\subset B_{t+R},\quad 0\le t<T.
\end{equation}

In what follows, we put $r=|x|$, $\omega=(\omega_1, \omega_2)=|x|^{-1}x$, and $\omega_0=-1$.
We write $\hat{\omega}=(\omega_0,\omega)=(-1,\omega)$.
We define 
$$
U(t, x):=D_-\bigl(r^{1/2} u(t, x) \bigr),\quad (t, x)\in [0,T)\times (\R^2\setminus\{0\}).
$$ 
From \eqref{WaveEq} and \eqref{pcd}, we get
\begin{equation}
\label{Red01}
\pa_+U(t, x)= -\frac{1}{2t}F(\hat{\omega}) |U(t, x)|^2U(t, x)+H(t, x),
\end{equation}
where $H=H(t,x)$ is given by
\begin{align*}
H=& -\frac{1}{2}
\left(r^{1/2}F(\pa u)-t^{-1}F(\hat{\omega})|U|^2U\right)
{}-\frac{1}{2r^{3/2}}\left(\pa_\theta^2 u+\frac{1}{4}u\right).
\end{align*}
\eqref{Red01} plays an important role in our analysis.

We also need the following to estimate the generalized derivative $\Gamma^\alpha u$
for a multi-index $\alpha=(\alpha_0, \alpha_1, \ldots, \alpha_6)\in \ZP^7$:
\eqref{Comm01} and \eqref{WaveEq} yield
$\dal(\Gamma^\alpha u)=\widetilde{\Gamma}^\alpha\left(F(\pa u)\right)$,
which leads to
\begin{equation}
\label{Red02}
\pa_+U^{(\alpha)}= -\frac{1}{2t}F(\hat{\omega}) 
\left( 2|U|^2U^{(\alpha)}+U^2\overline{U^{(\alpha)}}\right)
+H_\alpha,
\end{equation}
where we have put
$$
U^{(\alpha)}(t,x):= D_-\bigl(r^{1/2}\Gamma^\alpha u(t,x)\bigr),\\
$$
and $H_\alpha=H_\alpha(t,x)$ is given by
\begin{align*}
H_\alpha=& -\frac{1}{2}
\left(r^{1/2}\widetilde{\Gamma}^\alpha F(\pa u)-
t^{-1}F(\hat{\omega})\bigl(2|U|^2U^{(\alpha)}+U^2 \overline{U^{(\alpha)}}\bigr)\right)
\\
&
{}-\frac{1}{2r^{3/2}}\left(\pa_\theta^2 \Gamma^\alpha u+\frac{1}{4} \Gamma^\alpha u\right).
\end{align*}
Note that $U=U^{(\alpha)}$ if $|\alpha|=0$.

We put
\begin{equation}
\label{DefLamTR}
\Lambda_{T,R}:=\{(t,x)\in [0, T)\times \R^2;\, 1\le t/2\le r\le t+R\}.
\end{equation}
Note that we have
$$
(1+t+r)^{-1}\le r^{-1}\le 2t^{-1}\le 3(1+t)^{-1}\le 3(R+2)(1+t+r)^{-1}
$$
for $(t,x)\in \Lambda_{T,R}$. In other words, the weights $\jb{t+r}^{-1}$, 
$(1+t)^{-1}$, $r^{-1}$, and
$t^{-1}$ are equivalent to each other in $\Lambda_{T,R}$.
For nonnegative integer $s$, we define
\begin{equation}
\label{Basis300}
|u(t,x)|_{\sharp,s}:=|\pa u(t,x)|_s+\jb{t+r}^{-1}|u(t,x)|_{s+1}.
\end{equation}
For $|\alpha|\le s$, Corollary~\ref{Rewrite02} and \eqref{Comm03} immediately imply that
\begin{equation}
|U^{(\alpha)}(t,x)|\le C_s r^{1/2}|u(t,x)|_{\sharp,s}
\label{Basic301}
\end{equation}
for $(t,x)\in \Lambda_{T, R}$ with some positive constant $C_s$.
Our final goal in this section is to prove the following.
\begin{prp}\label{Pro301}
Suppose that \eqref{CExp} is fulfilled.
Let $s$ be a positive integer, and suppose that $1\le |\alpha|\le s$.
Then there are positive constants $C$ and $C_s$ such that
\begin{align}
\label{Basic321}
|H(t,x)|
 \le & C \left(t^{-1/2}|u|_{\sharp,0}^2|u|_1+t^{-3/2}|u|_2
\right),\\
|H_\alpha(t,x)|\le & C_s (t^{-1/2}|u|_{\sharp,s}^2|u|_{s+1}
+r^{1/2}|\pa u|_{s-1}^3+t^{-3/2}|u|_{s+2})
\label{Basic322}
\end{align}
for $(t,x)\in \Lambda_{T,R}$.
\end{prp}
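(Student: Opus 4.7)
The plan is to split $H$ into the angular piece $-\tfrac{1}{2r^{3/2}}(\pa_\theta^2 u + u/4)$ and the nonlinear piece $-\tfrac{1}{2}(r^{1/2}F(\pa u) - t^{-1}F(\hat\omega)|U|^2 U)$. The angular piece is immediate: since $\pa_\theta=\Omega\in\Gamma$, one has $|\pa_\theta^2 u|+|u|/4\le C|u|_2$, and on $\Lambda_{T,R}$ the weight $r^{-3/2}$ is comparable to $t^{-3/2}$, which delivers the $t^{-3/2}|u|_2$ contribution in \eqref{Basic321}. An identical computation produces the $t^{-3/2}|u|_{s+2}$ contribution in \eqref{Basic322}.

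For the nonlinear piece in $H$, I would rewrite $r^{1/2}F(\pa u)=r^{-1}\sum p_{abc}(r^{1/2}\pa_a u)(r^{1/2}\pa_b u)\overline{(r^{1/2}\pa_c u)}$ and invoke Corollary~\ref{Rewrite02} in the form $r^{1/2}\pa_a u=\omega_a U+E_a$ with $|E_a|\le Ct^{-1/2}|u|_1$ on $\Lambda_{T,R}$. Expanding the triple product, the pure-$U$ term assembles into $r^{-1}F(\hat\omega)|U|^2 U$, while every other term carries at least one factor of $E_a$. Using $|U|\le Cr^{1/2}|u|_{\sharp,0}$ from \eqref{Basic301} and $|u|_1\le\jb{t+r}|u|_{\sharp,0}$ (directly from \eqref{Basis300}) to handle the two- and three-$E$ contributions, each such error term, after the $r^{-1}$ prefactor, is bounded by $Ct^{-1/2}|u|_{\sharp,0}^2|u|_1$.

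The remaining discrepancy $(r^{-1}-t^{-1})F(\hat\omega)|U|^2 U$ is the crux. Here $|r^{-1}-t^{-1}|=|t-r|/(rt)\le\jb{t-r}/(rt)$, so the naive bound $|U|^3\le Cr^{3/2}|u|_{\sharp,0}^3$ would lose an entire factor of $t$. To save this factor I would invoke Lindblad's Lemma~\ref{Rewrite03}: since $U=r^{1/2}D_-u+\tfrac14 r^{-1/2}u$ and $|\pa u|_0\le C\jb{t-r}^{-1}|u|_1$, combined with $\jb{t-r}\le Cr$ on $\Lambda_{T,R}$, one obtains the sharpened bound $|U|\le Cr^{1/2}\jb{t-r}^{-1}|u|_1$. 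Keeping two factors of $U$ bounded by $Cr^{1/2}|u|_{\sharp,0}$ and the third by this Lindblad form, the $\jb{t-r}^{-1}$ cancels the $\jb{t-r}$ coming from $|r^{-1}-t^{-1}|$, leaving precisely $Ct^{-1/2}|u|_{\sharp,0}^2|u|_1$ and completing \eqref{Basic321}.

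For \eqref{Basic322} with $1\le|\alpha|\le s$, apply $\widetilde\Gamma^\alpha$ to $F(\pa u)$ by Leibniz and split the resulting sum into (a) terms in which all $|\alpha|$ derivatives land on a single factor of $\pa u$, and (b) distributed terms in which each factor carries at most $|\alpha|-1\le s-1$ derivatives. Using the commutator identity $\Gamma^\alpha\pa_a u=\pa_a\Gamma^\alpha u+\sum_{|\beta|<|\alpha|,c}C^{\alpha,\beta}_{a,c}\pa_c\Gamma^\beta u$, the (a) terms split further into a truly leading part and remainders of the same form as (b). The leading part, after multiplication by $r^{3/2}$ and substitution of $r^{1/2}\pa_a\Gamma^\alpha u=\omega_a U^{(\alpha)}+E_a^{(\alpha)}$ alongside $r^{1/2}\pa_b u=\omega_b U+E_b$, yields $F(\hat\omega)(2|U|^2 U^{(\alpha)}+U^2\overline{U^{(\alpha)}})$ together with errors handled exactly as in the $|\alpha|=0$ argument above (including the Lindblad gain for the $(r^{-1}-t^{-1})$ step), contributing $Ct^{-1/2}|u|_{\sharp,s}^2|u|_{s+1}$. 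The commutator remainders from (a) together with all distributed terms from (b) are products of three $r^{1/2}\pa u$-type factors with at most $s-1$ $\Gamma$-derivatives each, hence bounded by $Cr^{1/2}|\pa u|_{s-1}^3$. The main obstacle throughout is the $(r^{-1}-t^{-1})$ step, where the Lindblad gain of $\jb{t-r}^{-1}$ on one factor of $U$ is essential to avoid losing a power of $t$.
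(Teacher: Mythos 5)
Your proposal is correct and follows essentially the same route as the paper: the same splitting of $H$ into the angular term and the nonlinear discrepancy, the same use of Corollary~\ref{Rewrite02} to replace $r^{1/2}\pa_a u$ by $\omega_a U$ up to an $O(t^{-1/2}|u|_1)$ error, the same application of Lindblad's Lemma~\ref{Rewrite03} to absorb the $\jb{t-r}$ factor arising from $r^{-1}-t^{-1}$, and the same Leibniz splitting (leading terms with all derivatives on one factor versus distributed terms bounded by $r^{1/2}|\pa u|_{s-1}^3$) for $H_\alpha$. The only cosmetic difference is that you phrase the Lindblad gain as a sharpened bound on one factor of $U$, whereas the paper applies it via $\jb{t-r}|u|_{\sharp,0}\le C(\jb{t-r}|\pa u|+|u|_1)\le C|u|_1$; the content is identical.
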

Before we proceed to the proof of Proposition~\ref{Pro301}, we
show one lemma.
For $a,b,c=0,1,2$, we define
$$
Q_{abc}:=r^{1/2}(\pa_a u)(\pa_b u)(\overline{\pa_c u})-r^{-1}
\omega_a\omega_b\omega_c |U|^2U.
$$
For a multi-index $\alpha\in \ZP^7$, we also define
$$
\widetilde{Q}_{abc}^{(\alpha)}
:=r^{1/2}\Gamma^\alpha\left((\pa_a u)(\pa_b u)(\overline{\pa_c u})\right)
-r^{-1}\omega_a\omega_b\omega_c\left(2|U|^2U^{(\alpha)}+U^2\overline{U^{(\alpha)}}\right).
$$
\begin{lem}\label{L302}
Let $|\alpha|\le s$.
Then there are positive constants $C$ and $C_s$ such that
\begin{align}
\label{L3021}
|Q_{abc}|\le & C t^{-1/2}|u|_{\sharp,0}^2|u|_1,\\
\label{L3022}
|\widetilde{Q}_{abc}^{(\alpha)}|\le &
C_s (t^{-1/2} |u|_{\sharp, s}^2|u|_{s+1}
        {}+r^{1/2}|\pa u|_{s-1}^3)
\end{align}
for $(t,x)\in \Lambda_{T,R}$. 
\end{lem}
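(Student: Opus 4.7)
The plan is to handle both inequalities by the same two-step recipe: first expand the cubic product (for \eqref{L3022}, after distributing $\Gamma^\alpha$ by Leibniz), then identify the part that matches the subtracted ``radiation-field'' expression and estimate every remaining error. The principal tool will be Corollary~\ref{Rewrite02}, which on $\Lambda_{T,R}$ lets me write
\begin{equation*}
r^{1/2}\pa_a\psi = \omega_a\,D_-(r^{1/2}\psi) + R_a(\psi),\qquad |R_a(\psi)|\le C\jb{t+r}^{-1/2}|\psi|_1,
\end{equation*}
for each $a\in\{0,1,2\}$. I will freely use the equivalence $\jb{t+r}\sim r\sim t$ on $\Lambda_{T,R}$, the bound $|U^{(\beta)}|\le C_s r^{1/2}|u|_{\sharp,|\beta|}$ from \eqref{Basic301}, and the trivial estimate $|u|_1\le C\jb{t+r}\,|u|_{\sharp,0}$ stemming from the definition of $|u|_{\sharp,0}$.

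For \eqref{L3021}, set $e_a:=r^{1/2}\pa_a u-\omega_a U$, so $|e_a|\le C\jb{t+r}^{-1/2}|u|_1$. Expanding
\begin{equation*}
r^{1/2}(\pa_a u)(\pa_b u)(\overline{\pa_c u}) = r^{-1}(\omega_a U+e_a)(\omega_b U+e_b)(\omega_c\overline{U}+\overline{e_c})
\end{equation*}
produces the principal piece $r^{-1}\omega_a\omega_b\omega_c|U|^2U$, which is exactly the subtracted term in $Q_{abc}$, plus seven cross terms each containing at least one $e$-factor. The largest of these is bounded by $r^{-1}\cdot Ct^{-1/2}|u|_1\cdot(Cr^{1/2}|u|_{\sharp,0})^2\le Ct^{-1/2}|u|_{\sharp,0}^2|u|_1$, and the cross terms with two or three $e$-factors are even smaller once one trades an extra $|u|_1$ for $|u|_{\sharp,0}$ via $|u|_1\le Ct\,|u|_{\sharp,0}$.

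For \eqref{L3022} I will apply the Leibniz rule (each $\Gamma_j$ is a first-order differential operator, hence a derivation) to get
\begin{equation*}
\Gamma^\alpha\bigl((\pa_a u)(\pa_b u)(\overline{\pa_c u})\bigr)
=\sum_{\beta_1+\beta_2+\beta_3=\alpha}C_{\vec\beta}\,(\Gamma^{\beta_1}\pa_a u)(\Gamma^{\beta_2}\pa_b u)(\overline{\Gamma^{\beta_3}\pa_c u}),
\end{equation*}
and then split this sum into (i) the three ``top'' splits where one $\beta_j=\alpha$ and the other two vanish, and (ii) every other split, in which at least two $\beta_j$ are positive, so each $|\beta_j|\le s-1$. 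Iterating $[\Gamma_a,\pa_b]=\sum_c B^{ab}_c\pa_c$ gives $\Gamma^\beta\pa_a u=\pa_a\Gamma^\beta u+(\text{sum of terms }\pa_c\Gamma^{\beta'}u\text{ with }|\beta'|\le|\beta|-1)$, which in turn splits each category (i) term into a principal piece of the form $(\pa\Gamma^\alpha u)$-times-two-$\pa u$-factors plus lower corrections of $\Gamma$-weight $\le s-1$. Combining those corrections with category (ii) produces a sum of products of three $\pa\Gamma^\cdot u$-factors of individual $\Gamma$-weight $\le s-1$, collectively bounded by $C_s|\pa u|_{s-1}^3$; after multiplying by $r^{1/2}$ this gives the second term of \eqref{L3022}.

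It then remains to treat the three principal pieces $r^{1/2}\bigl[(\pa_a\Gamma^\alpha u)(\pa_b u)(\overline{\pa_c u})+(\pa_a u)(\pa_b\Gamma^\alpha u)(\overline{\pa_c u})+(\pa_a u)(\pa_b u)(\overline{\pa_c\Gamma^\alpha u})\bigr]$. Factoring $r^{-1}$ and applying Corollary~\ref{Rewrite02} to every $r^{1/2}\pa$-factor, the product of the leading $\omega\cdot D_-(r^{1/2}\cdot)$-parts sums to exactly $r^{-1}\omega_a\omega_b\omega_c(2|U|^2U^{(\alpha)}+U^2\overline{U^{(\alpha)}})$ -- the first two pieces each contribute $\omega_a\omega_b\omega_c|U|^2U^{(\alpha)}$ and the third contributes $\omega_a\omega_b\omega_c U^2\overline{U^{(\alpha)}}$ -- cancelling precisely the subtracted term in the definition of $\widetilde Q_{abc}^{(\alpha)}$. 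Each surviving cross term carries at least one $R$-factor of size $\le C\jb{t+r}^{-1/2}|u|_{s+1}$ multiplied by two factors bounded by $Cr^{1/2}|u|_{\sharp,s}$ via \eqref{Basic301}, which together give $\le Ct^{-1/2}|u|_{s+1}|u|_{\sharp,s}^2$, the first term of \eqref{L3022}. The main obstacle throughout is essentially bookkeeping: one must verify that every non-principal Leibniz or commutator correction truly collapses to a product of three $\pa\Gamma^\cdot u$-factors of individual $\Gamma$-weight $\le s-1$, so that it is absorbed by $|\pa u|_{s-1}^3$ instead of leaking into a term that would demand the sharper $t^{-1/2}|u|_{\sharp,s}^2|u|_{s+1}$ bound.
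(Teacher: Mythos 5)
Your proposal is correct and follows essentially the same route as the paper: Leibniz/commutator expansion reducing to the three "top" terms $R^{(\alpha)}_{abc}$ with a $C_s|\partial u|_{s-1}^3$ error, then Corollary~\ref{Rewrite02} together with \eqref{Basic301} to replace each $r^{1/2}\partial$-factor by $\hat\omega U$ (resp. $\hat\omega U^{(\alpha)}$) up to $O(\jb{t+r}^{-1/2}|u|_{s+1})$ errors. The only cosmetic difference is that you expand the cubic product of $(\omega U + e)$-factors where the paper telescopes the difference of products; the estimates are identical.
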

%
\begin{proof} 
Suppose that $(t,x)\in \Lambda_{T,R}$.
By \eqref{Basic301} and Corollary~\ref{Rewrite02}, we get
\begin{align}
|Q_{abc}|=& r^{-1}\bigl( (r^{1/2}\pa_a u)(r^{1/2}\pa_b u)(\overline{r^{1/2}\pa_c u})
{}-\omega_a\omega_b\omega_c|U|^2U\bigr)
\nonumber\\
\le & Cr^{-1}(|r^{1/2}\pa u|+|U|)^2|r^{1/2}\pa u-\hat \omega U|
\nonumber\\
\le & Ct^{-1/2}|u|_{\sharp,0}^2|u|_1.
 \label{Basic311}
\end{align}

Let $|\alpha|\le s$. In view of \eqref{Comm02} and \eqref{Comm03} we obtain
\begin{equation}
\label{Basic312}
\left|\Gamma^\alpha\bigl( (\pa_a u)(\pa_b u)(\overline{\pa_c u})\bigr)-R_{abc}^{(\alpha)}\right|
\le C_s|\pa u|_{[s/2]}^2|\pa u|_{s-1}\le C_s|\pa u|_{s-1}^3
\end{equation}
with some positive integer $C_s$,
where
$$
R_{abc}^{(\alpha)}=(\pa_a\Gamma^\alpha u)(\pa_b u)(\overline{\pa_c u})
{}+(\pa_a u)(\pa_b \Gamma^\alpha u)(\overline{\pa_c u})+(\pa_a u)(\pa_b u)(\overline{\pa_c \Gamma^\alpha u}).
$$
Similarly to \eqref{Basic311}, we get
\begin{align}
& |r^{1/2}R_{abc}^{(\alpha)}-r^{-1}\omega_a\omega_b\omega_c(2|U|^2U^{(\alpha)}+
U^2\overline{U^{(\alpha)}})|
\le C_st^{-1/2} |u|_{\sharp,s}^2|u|_{s+1}.
\label{Basic313}
\end{align}
The desired estimate \eqref{L3022} follows from \eqref{Basic312} and \eqref{Basic313}.
\end{proof}
%

\begin{proof}[Proof of Proposition~$\ref{Pro301}$]
Suppose that $(t,x)\in \Lambda_{T,R}$.
Recalling \eqref{CExp},
we get $r^{1/2}F(\pa u)-r^{-1}F(\hat{\omega})|U|^2U=\sum_{a,b,c} p_{abc} Q_{abc}$,
and Lemma~\ref{L302} yields
\begin{equation}
\label{PL301}
\left|r^{1/2}F(\pa u)-r^{-1}F(\hat{\omega})|U|^2U\right|\le Ct^{-1/2}|u|_{\sharp,0}^2|u|_1.
\end{equation}
Using \eqref{Basic301} and Lemma~\ref{Rewrite03} we get
\begin{align}
\left|(r^{-1}-t^{-1})F(\hat{\omega})|U|^2U\right|\le & Ct^{-1}r^{-1}\jb{t-r}r^{3/2}|u|_{\sharp,0}^3\nonumber\\
\le & Ct^{-1/2}|u|_{\sharp,0}^2 (\jb{t-r}|\pa u|+|u|_1) \nonumber\\
\le & Ct^{-1/2}|u|_{\sharp,0}^2 |u|_1 .
\label{PL302}
\end{align}
It is clear that we have
\begin{equation}
\label{PL304}
r^{-3/2}|\pa_\theta^2 u+u/4|\le Ct^{-3/2}|u|_2.
\end{equation}
Now \eqref{Basic321} follows from \eqref{PL301}, \eqref{PL302}, and \eqref{PL304}.

Using \eqref{L3022} instead of \eqref{L3021}, we can show \eqref{Basic322} in a similar manner. 
\end{proof}
%
\section{A key lemma for ordinary differential equations}
\label{LemmaODE}
Let $t_0\ge 1$. Motivated by \eqref{Red01}, we consider the initial value problem for 
the following kind of ODE:
\begin{align}
\label{ODE}
& z'(t)=-\frac{K}{2t} |z(t)|^2z(t)+{J}(t), \quad t_0<t<\infty,\\
\label{ODEinit}
& z(t_0)=z_0, 
\end{align}
where $K, z_0 \in \C$, and ${J} \in C\bigl([t_0, \infty);\C\bigr)$.
The following lemma is a refinement of the ones
obtained in Hayashi-Naumkin-Sunagawa~\cite{HayNauSun08} and
Sunagawa~\cite{Sun06}.
\begin{lem}\label{ODELemma02}
Let $\eps>0$, $\sigma\in \R$, $\rho>1$, $0<\mu<\rho-1$,
and $\kappa\ge 0$. 
Suppose that $\realpart K\ge 0$.
We assume that there are positive constants $K_0$ and $E_0$ such that
\begin{align*}
|K|\le & K_0,\\
|z_0|\le & E_0 \eps\jb{\sigma}^{-\kappa-\rho+1}, \\
|{J}(t)|\le & E_0 \eps \jb{\sigma}^{-\kappa} t^{-\rho},\quad t\ge t_0.
\end{align*}
We also assume that there is a positive constant $c_0$ such that
\begin{equation}
c_0^{-1}\jb{\sigma}<
t_0<c_0\jb{\sigma}.
\label{CondT0}
\end{equation}
Then \eqref{ODE}--\eqref{ODEinit}  
admits a unique global solution $z\in C^1([t_0,\infty);\C)$, and there is a
positive constant $C_0=C_0(E_0, c_0, \rho)$ such that we have
\begin{equation}
\label{ODEBound}
|z(t)|\le C_0\eps \jb{\sigma}^{-\kappa-\rho+1} ,\quad t_0\le t<\infty.
\end{equation}
Moreover 
there is a positive constant $\eps_1=\eps_1(K_0, E_0, c_0, \rho, \mu)$
such that if $0<\eps\le \eps_1$, then we can find $p_{0}\in \C$ 
satisfying
\begin{align}
\label{ODEaSymp}
|z(t)-p(\log t)|\le & C_1 \eps \jb{\sigma}^{-\kappa-\mu}t^{-\rho+\mu+1},\quad t\ge t_0, \\
\label{ODEprof}
|p_0|\le & C_1 \eps \jb{\sigma}^{-\kappa-\rho+1},
\end{align}
where $p(t)$ is a solution to
\begin{align}
\left\{\begin{array}{ll}
 p'(\tau)=-\frac{K}{2}|p(\tau)|^2p(\tau),& \tau>0,\\
 p(0)=p_{0},
\end{array}\right.
\label{ReducedODE}
\end{align}
and $C_1=C_1(K_0, E_0, c_0, \rho, \mu)$ is a positive constant.
\end{lem}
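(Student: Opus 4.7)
The plan is threefold: first establish the a priori bound \eqref{ODEBound} (which also guarantees global existence) by an elementary energy estimate that exploits $\realpart K\ge 0$; next reduce \eqref{ODE} to an autonomous perturbed ODE in the variable $\tau:=\log t$; and finally construct the profile $p_0$ via a Cauchy sequence of backward-in-time solutions of the reduced equation in \eqref{ReducedODE} anchored at $y(T):=z(e^T)$ as $T\to\infty$. The exponent $\mu$ will enter both as a slack in the Gronwall exponent and through the conversion $\jb{\sigma}\le c_0\, t$.

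For \eqref{ODEBound} and global existence, I multiply \eqref{ODE} by $\bar z$ and take the real part. Because $\realpart K\ge 0$, the cubic contribution is non-positive, giving $\tfrac{d}{dt}|z|^2\le 2|z||J|$ and hence $\tfrac{d}{dt}|z|\le |J|$ away from the zero set of $z$ (extend by continuity). Integrating from $t_0$ and using the hypotheses on $|z_0|$, $|J|$, and \eqref{CondT0},
\[
|z(t)|\le E_0\eps\jb{\sigma}^{-\kappa-\rho+1}+\frac{E_0\, c_0^{\rho-1}}{\rho-1}\eps\jb{\sigma}^{-\kappa-\rho+1},
\]
which is \eqref{ODEBound} with $C_0=E_0\bigl(1+c_0^{\rho-1}/(\rho-1)\bigr)$; since $|z|$ stays bounded on any finite interval, no blow-up occurs.

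For the profile, set $y(\tau):=z(e^\tau)$ and $\tau_0:=\log t_0$, whereby \eqref{ODE} becomes $y'(\tau)=-\tfrac K2|y|^2 y+\tilde J(\tau)$ on $[\tau_0,\infty)$ with $y(\tau_0)=z_0$ and $|\tilde J(\tau)|\le E_0\eps\jb{\sigma}^{-\kappa}e^{(1-\rho)\tau}$, which is integrable since $\rho>1$. For each $T\ge\tau_0$, let $q_T$ be the solution of the reduced equation in \eqref{ReducedODE} subject to $q_T(T)=y(T)$, extended backward to $[0,T]$ via the explicit solvability of \eqref{ReducedODE}. Writing $v_T:=y-q_T$, the elementary bound $\bigl|\,|y|^2 y-|q_T|^2 q_T\,\bigr|\le 3M^2|v_T|$ with $M:=C_0\eps\jb{\sigma}^{-\kappa-\rho+1}$ and the terminal condition $v_T(T)=0$ yield, by a backward Gronwall estimate with exponent $\tfrac{3K_0 M^2}{2}\le\mu$ (attainable once $\eps\le\eps_1$, using $\mu<\rho-1$),
\[
|v_T(\tau)|\le\int_\tau^T e^{\mu(s-\tau)}|\tilde J(s)|\,ds\le\frac{E_0\eps}{\rho-1-\mu}\jb{\sigma}^{-\kappa}e^{(1-\rho)\tau}
\]
uniformly in $T$. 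Combining this with the analogous backward Lipschitz stability of \eqref{ReducedODE} on $[0,T_1]$ shows $\{q_T(\tau)\}_T$ is Cauchy for each $\tau\ge 0$; the limit $p(\tau):=\lim_{T\to\infty}q_T(\tau)$ solves \eqref{ReducedODE}, and I set $p_0:=p(0)$.

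Passing $T\to\infty$ in the estimate for $|v_T|$ gives $|y(\tau)-p(\tau)|\le C\eps\jb{\sigma}^{-\kappa}e^{(1-\rho)\tau}$; returning to the $t$-variable via $\jb{\sigma}\le c_0\, t$ to absorb $\jb{\sigma}^{-\kappa}\le c_0^\mu\jb{\sigma}^{-\kappa-\mu}t^\mu$ yields \eqref{ODEaSymp}. The bound \eqref{ODEprof} follows from the explicit formula $|q_T(0)|^2=|y(T)|^2/\bigl(1-\realpart K|y(T)|^2 T\bigr)$ on passing to the limit. The main technical obstacle is verifying that the backward extension of $q_T$ to all of $[0,T]$ is well-defined uniformly in $T\ge\tau_0$, which in the dissipative case $\realpart K>0$ requires $\realpart K\, |y(T)|^2 T$ to stay strictly below $1$; this demands a refined decay estimate for $|y|$ obtained by working with the Bernoulli-type identity $(|y|^{-2})'=\realpart K-2\realpart(\bar y\tilde J)/|y|^4$ and exploiting $|y(\tau_0)|^{-2}\gtrsim\eps^{-2}\jb{\sigma}^{2(\kappa+\rho-1)}$ to dominate both $\realpart K\tau_0$ and the perturbation integral, and crucially uses both the smallness $\eps\le\eps_1$ and the strict inequality $\mu<\rho-1$.
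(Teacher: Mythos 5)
Your first step (the a priori bound and global existence via $\tfrac{d}{dt}|z|^2\le 2|J||z|$ and $\realpart K\ge 0$) is exactly the paper's argument and is correct. The profile construction, however, takes a genuinely different route from the paper and contains a gap at precisely the point you flag as ``the main technical obstacle'': you have not actually closed it, and the fix you sketch does not work.

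The problem is the backward extension of $q_T$ from $\tau=T$ down to $\tau=0$. In the dissipative case $\realpart K>0$ the backward solution of the reduced equation satisfies $|q_T(\tau)|^2=|y(T)|^2/\bigl(1-\realpart K\,|y(T)|^2(T-\tau)\bigr)$, which blows up before reaching $\tau=0$ unless $\realpart K\,|y(T)|^2\,T<1$. The only bound available at that stage of your argument is the a priori bound $|y(T)|\le C_0\eps\jb{\sigma}^{-\kappa-\rho+1}$, which gives $\realpart K\,|y(T)|^2\,T\lesssim \eps^2 T\to\infty$; so the extension requires the refined decay $|y(T)|^2\lesssim \bigl(\realpart K\,T\bigr)^{-1}$, i.e.\ essentially the conclusion of the lemma. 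Your proposed route to this refinement, the identity $(|y|^{-2})'=\realpart K-2\realpart(\bar y\tilde J)/|y|^4$, cannot be controlled under the stated hypotheses: the error term carries $|y|^{-3}$, and there is no lower bound on $|y|$ anywhere in the lemma (only upper bounds on $|z_0|$ and $|J|$ are assumed; $z_0=0$ is allowed, and $y$ may vanish or become arbitrarily small, at which point the identity is useless). Moreover, even granting the extension, $1-\realpart K\,|y(T)|^2T\to 0$ as $T\to\infty$, so the convergence of $q_T(0)=y(T)/\sqrt{1-\realpart K|y(T)|^2T}$ demands control of $|y(T)|^2$ to precision $o(1/T)$ beyond its leading asymptotics --- again, the substance of what is to be proved. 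Your backward Gronwall bound on $v_T$ also quietly assumes $|q_T(s)|\le M$ on all of $[\tau,T]$, which is part of the same unproved extension.

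The paper circumvents all of this by never going backward in time: it writes $z=\xi/\sqrt{\eta}$ with $\eta'=(\realpart K/t)|\xi|^2\ge0$, $\eta(t_0)=1$, so the dissipation is absorbed into the monotone factor $\eta\ge1$ while $\xi$ obeys a norm-preserving (purely imaginary) nonlinearity plus the integrable forcing. All estimates are then upper bounds on $|\xi|$ and $\eta$ --- no division by $|y|$ and no lower bounds are ever needed --- and the limit profile is assembled forward in time from the convergent integrals defining $z_+$, $\eta_\infty$, $\Theta_\infty$, with $p_0:=\xi_\infty(1)/\sqrt{\eta_\infty(1)}$. If you want to salvage your terminal-value-problem scheme, you would first need to establish the refined decay of $|y|$ by some such splitting, at which point the backward construction becomes redundant.
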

%
\begin{proof} 
We put $\nu=\kappa+\rho-1(>0)$.
Let $z$ be the solution to \eqref{ODE}--\eqref{ODEinit}
for $t_0<t<T_0$ with some $T_0(>t_0)$. 
Since $\realpart K\ge 0$, it follows from
\eqref{ODE} that
$$
\frac{d}{dt}(|z(t)|^2)=-\frac{\realpart K}{t}|z(t)|^4
+2\realpart({J}(t)\overline{z(t)})
\le 2|J(t)|\,|z(t)|,
$$
which yields
\begin{align*}
|z(t)|\le & |z_0|+
\int_{t_0}^t |{J}(\tau)|d\tau\le E_0\eps\left(\jb{\sigma}^{-\nu}
+ \jb{\sigma}^{-\kappa}\int_{t_0}^\infty \tau^{-\rho} d\tau\right)
\\
=& E_0\eps\left(\jb{\sigma}^{-\nu}+\jb{\sigma}^{-\kappa}\frac{t_0^{-\rho+1}}{\rho-1}\right)
\le E_0\left(1+\frac{c_0^{\rho-1}}{\rho-1}\right)\eps\jb{\sigma}^{-\nu}
\end{align*}
for $t_0\le t<T_0$. With this {\it a priori} bound and the local existence theorem for ODEs, we can easily show the global existence of the unique solution $z$ to \eqref{ODE}--\eqref{ODEinit}, and we obtain \eqref{ODEBound}.

Now we turn our attention to the asymptotic behavior of $z$. 
In what follows, $C$ stands for various positive constants
that can be determined only by 
$K_0$, $E_0$, $c_0$, $\rho$, and $\mu$.
The actual value of $C$ may change line by line.

Let $\xi$ and $\eta$
be the solution to the system of ODEs
\begin{align}
& \xi'(t)= -i \frac{\imagpart K}{2t\eta(t)} |\xi(t)|^2 \xi(t)
  +{J}(t)\sqrt{\eta(t)}, && t>t_0,
\label{EqXi}
\\
& \eta'(t)=\frac{\realpart K}{t}|\xi(t)|^2, && t>t_0,
\label{EqEta}\\
& \xi(t_0)=z_0,\quad \eta(t_0)=1.
\label{SysData}
\end{align}
We can easily check that  $z(t)={\xi(t)}/{\sqrt{\eta(t)}}$
as long as $(\xi, \eta)$ exists.

By the local existence theorem for ODEs, there is a local solution 
$(\xi, \eta)$ on
 $[t_0, T_1)$ with some $T_1(>t_0)$. For $t_0<\tau_0<T_1$, we put
$$
M_{\tau_0}:=\sup_{t_0\le t\le \tau_0} |\xi(t)|.
$$
Then we get $0\le \eta'(t)\le K_0 M_{\tau_0}^2t^{-1}$ for 
$t_0\le t\le {\tau_0}$, which leads to 
\begin{equation}
\label{BoundEta}
1\le \eta(t) \le 1+K_0 M_{\tau_0}^2 \log(t/t_0), \quad t_0\le t\le \tau_0.
\end{equation}
By \eqref{EqXi} we obtain
$$
\frac{d}{dt}|\xi(t)|^2=
2\realpart\left(\overline{\xi(t)} \, J(t)\sqrt{\eta(t)}\right)
\le 2|\xi(t)||{J}(t)|\sqrt{\eta(t)} ,
$$
which, in combination with \eqref{BoundEta}, yields
\begin{align}
|\xi(t)|
\le & |z_0|+\int_{t_0}^t |J(\tau)| \sqrt{\eta(\tau)} d\tau
\nonumber\\
\le & C\eps\left(\jb{\sigma}^{-\nu}
+\jb{\sigma}^{-\kappa}\int_{t_0}^t \tau^{-\rho}
\left(1+M_{\tau_0} \sqrt{\log \frac{\tau}{t_0}}\right)d\tau\right)
\label{B401}
\end{align}
for $t_0\le t\le \tau_0$. Since 
we have $\log t\le C_\mu t^{2\mu}$ for $t\ge 1$ with a positive constant $C_\mu$,
and since $-\rho+\mu+1<0$,
we get
$$
\int_{t_0}^t \tau^{-\rho}\sqrt{\log \frac{\tau}{t_0}}d\tau\le Ct_0^{-\mu}\int_{t_0}^\infty \tau^{-\rho+\mu}d\tau\le Ct_0^{-\rho+1}
\le C\jb{\sigma}^{-\rho+1},\quad t\ge t_0.
$$
Hence we obtain from \eqref{B401} that
\begin{equation}
\label{B402}
M_{\tau_0}\le C\eps\jb{\sigma}^{-\nu}+C\eps \jb{\sigma}^{-\nu} M_{\tau_0}\le C\eps\jb{\sigma}^{-\nu}+\frac{1}{2}M_{\tau_0}
\end{equation}
for $0<\eps\le \eps_2:=1/(2C)$ (with the constant $C$ from \eqref{B402}),
which yields
\begin{equation}
\label{BoundXi}
\left(\sup_{t_0\le t\le \tau_0}|\xi(t)|=\right)M_{\tau_0} \le C\eps\jb{\sigma}^{-\nu},
\end{equation}
provided that $0<\eps\le \eps_2$.
With the {\it a priori} bound \eqref{BoundXi} as well as \eqref{BoundEta},
we see that the solution $(\xi, \eta)$ exists globally in time, and
we also have
\begin{equation}
\label{BoundXiEta}
|\xi(t)|\le C\eps\jb{\sigma}^{-\nu},\ 1\le\eta(t)\le 1+C \eps^2\log \frac{t}{t_0}
\le 1+C\eps^2 \left(\frac{t}{t_0}\right)^{2\mu} 
\end{equation}
for $t\ge t_0$, provided that $0<\eps\le \eps_2$.

We assume $0<\eps\le \eps_2$ from now on.
We put
$$
\Theta(t)=\frac{\imagpart K}{2}\int_{t_0}^t \frac{|\xi(\tau)|^2}{\tau\eta(\tau)}d\tau,
\quad t\ge t_0.
$$
Then \eqref{EqXi} implies
$$
\xi(t)=e^{-i\Theta(t)}\left(z_0+
\int_{t_0}^t e^{i\Theta(\tau)}J(\tau)\sqrt{\eta(\tau)}d\tau
\right),
\quad t\ge t_0.
$$
We define
$$
z_+:= z_0+\int_{t_0}^\infty 
e^{i\Theta(\tau)} J(\tau)\sqrt{\eta(\tau)}d\tau,
\quad \xi_+(t):=e^{-i\Theta(t)}z_+.
$$
Then we obtain
\begin{align}
|\xi_+(t)-\xi(t)|\le & \int_{t}^\infty| J(\tau)|
\sqrt{\eta(\tau)}d\tau\nonumber\\
\le & C\eps\jb{\sigma}^{-\kappa}\int_t^\infty \tau^{-\rho}
\left(1+\eps_2 \left(\frac{\tau}{t_0}\right)^\mu\right)d\tau\nonumber\\
\le & C \eps (\jb{\sigma}^{-\kappa}t^{-\rho+1}+\jb{\sigma}^{-\kappa-\mu} t^{-\rho+\mu+1})\nonumber\\
\le & C\eps \jb{\sigma}^{-\kappa-\mu}t^{-\rho+\mu+1}, \quad t\ge t_0.
\label{B454}
\end{align}
Especially we have
$$
|z_+-z_0|\le C\eps \jb{\sigma}^{-\kappa-\mu}t_0^{-\rho+\mu+1}
\le C\eps \jb{\sigma}^{-\nu},
$$
which shows
\begin{equation}
\label{B456}
|z_+|\le C\eps\jb{\sigma}^{-\nu}.
\end{equation}

Observing that $|z_+|=|\xi_+(t)|$, we obtain from \eqref{B454} and \eqref{B456} that
\begin{align}
\label{B457}
\left||\xi(t)|^2-|z_+|^2\right|\le & (|\xi(t)|+|z_+|)|\xi(t)-\xi_+(t)|
\le C\eps^2\jb{\sigma}^{-\nu-\kappa-\mu}t^{-\rho+\mu+1}
\end{align}
for $t\ge t_0$.
We define
$$
\eta_\infty(t):=1+(\realpart K)\left(|z_+|^2 \log \frac{t}{t_0}
+\int_{t_0}^\infty \frac{|\xi(\tau)|^2-|z_+|^2}{\tau}d\tau\right), \quad t\ge 1.
$$
Since \eqref{EqEta} implies that
$$
\eta(t)=1+(\realpart K)\left(|z_+|^2 \log \frac{t}{t_0}
+\int_{t_0}^t \frac{|\xi(\tau)|^2-|z_+|^2}{\tau}d\tau\right), \quad t\ge t_0
$$
we obtain from \eqref{B457} that
\begin{align}
\label{B458}
|\eta(t)-\eta_\infty(t)|\le K_0\int_t^\infty\frac{\bigl||\xi(\tau)|^2-|z_+|^2\bigr|}{\tau}d\tau\le C\eps^2 \jb{\sigma}^{-\nu-\kappa-\mu}t^{-\rho+\mu+1}
\end{align}
for $t\ge t_0$.
Especially we have $\eta_\infty(t_0)\ge 1-C\eps^2$, which leads to
\begin{equation}
\label{B459}
\eta_\infty(t)\ge 1-C\eps^2-K_0|z_+|^2\log t_0\ge 1-C\eps^2-C \eps^2 \jb{\sigma}^{-2\nu}\log t_0, 
\quad t\ge 1
\end{equation}
with the help of \eqref{B456}.
By \eqref{CondT0},
there is a positive constant $c_1=c_1(c_0, \rho)$ such that we have
$$
\jb{\sigma}^{-2\nu}\log t_0\le \jb{\sigma}^{-2(\rho-1)} \log (c_0\jb{\sigma})\le c_1,
\quad \sigma\in \R.
$$
If we put
$$
\eps_1:=\min\left\{\sqrt{\frac{1}{4C}}, 
\sqrt{\frac{1}{4c_1C}},
\eps_2\right\}
$$
with the constant $C$ coming from \eqref{B459},
then we get 
\begin{equation}
\label{LBXi1}
\eta_\infty(t)\ge \frac{1}{2},\quad t\ge 1
\end{equation}
for $0<\eps\le \eps_1$.

From now on, we assume that $0<\eps \le \eps_1$.
We set
\begin{align*}
\Theta_\infty(t):=& \frac{\imagpart K}{2}
\int_{t_0}^t \frac{|z_+|^2}{\tau \eta_\infty(\tau)}d\tau, \quad t\ge 1\\
\Theta_0:=& \frac{\imagpart K}{2}
\int_{t_0}^\infty
\left(\frac{|\xi(\tau)|^2}{\tau\eta(\tau)}-\frac{|z_+|^2}{\tau\eta_\infty(\tau)}\right)d\tau.
\end{align*}
From \eqref{BoundXiEta}, \eqref{B456}, \eqref{B457}, \eqref{B458}, and \eqref{LBXi1}, we obtain
\begin{align*}
\left|\frac{|\xi(\tau)|^2}{\eta(\tau)}-\frac{|z_+|^2}{\eta_\infty(\tau)}\right|
\le & \left|\frac{|\xi(\tau)|^2-|z_+|^2}{\eta(\tau)}\right|+|z_+|^2\left|
\frac{1}{\eta(\tau)}-\frac{1}{\eta_\infty(\tau)}\right|
\\
\le & C\eps^2\jb{\sigma}^{-\nu-\kappa-\mu}\tau^{-\rho+\mu+1}
\end{align*}
for $\tau\ge t_0$, which yields
\begin{align}
|(\Theta_\infty(t)+\Theta_0)-\Theta(t)|\le & \frac{K_0}{2}
\int_{t}^\infty
\left|\frac{|\xi(\tau)|^2}{\tau\eta(\tau)}-\frac{|z_+|^2}{\tau\eta_\infty(\tau)}\right|d\tau
\nonumber\\
\le & C\eps^2 \jb{\sigma}^{-\nu-\kappa-\mu} t^{-\rho+\mu+1}, \quad t\ge t_0.
\label{Ma}
\end{align}
We define
$$
\xi_\infty(t):=e^{-i\Theta_\infty(t)}(e^{-i\Theta_0}z_+),\quad t\ge 1.
$$
Then \eqref{Ma} leads to
\begin{align}
|\xi_\infty(t)-\xi_+(t)|\le & |e^{-i(\Theta_\infty(t)+\Theta_0)}-e^{-i\Theta(t)}|\,|z_+|
\nonumber\\
\le & |(\Theta_\infty(t)+\Theta_0)-\Theta(t)|\,|z_+|
\le 
C\eps^3 \jb{\sigma}^{-2\nu-\kappa-\mu} t^{-\rho+\mu+1}, 
\end{align}
which, together with \eqref{B454}, yields
\begin{equation}
\label{B480}
|\xi_\infty(t)-\xi(t)|\le  
C\eps \jb{\sigma}^{-\kappa-\mu}t^{-\rho+\mu+1},\quad t\ge t_0.
\end{equation}

Observing that $|\xi_\infty(t)|=|z_+|$ by definition, we find that
\begin{align*}
\xi_\infty'(t)=& -i\Theta_\infty'(t)\xi_\infty(t)= -i\frac{\imagpart K}{2t\eta_\infty(t)} |\xi_\infty(t)|^2\xi_\infty(t),  \\
\eta_\infty'(t)=& \frac{\realpart K}{t}|\xi_\infty(t)|^2 
\end{align*}
for $t>1$. Hence if we put $z_\infty(t)=\xi_\infty(t)/\sqrt{\eta_\infty(t)}$,
then we get
$$
z_\infty'(t)= -\frac{K}{2t}|z_\infty(t)|^2z_\infty(t),\quad t>1.
$$
It follows from \eqref{BoundXiEta}, \eqref{B456}, \eqref{B458}, \eqref{LBXi1}, and \eqref{B480} that
\begin{align}
|z(t)-z_\infty(t)|
\le & \frac{|\xi(t)-\xi_\infty(t)|}{\sqrt{\eta(t)}}+|z_+|
\left|\frac{1}{\sqrt{\eta(t)}}-\frac{1}{\sqrt{\eta_\infty(t)}}\right| 
\nonumber\\
\le & 
C\eps \jb{\sigma}^{-\kappa-\mu} t^{-\rho+\mu+1}, \quad t\ge t_0.
\label{B481}
\end{align}

Finally we put $p_0:=z_\infty(1)$, and let $p$ be the solution to
\eqref{ReducedODE}.
It is clear that $z_\infty(t)=p(\log t)$ for $t\ge 1$.
By \eqref{B456} and \eqref{LBXi1}, we get
\begin{equation}
\label{B482}
|p_0|=\left|\frac{\xi_\infty(1)}{\sqrt{\eta_\infty(1)}}\right|\le C\eps\jb{\sigma}^{-\nu}.
\end{equation}
We obtain \eqref{ODEaSymp} and \eqref{ODEprof} from \eqref{B481} and \eqref{B482}.
This completes the proof.
\end{proof}
\section{Proof of Theorem~\ref{Global}}
\label{Proof0101}
In this section we prove Theorem~\ref{Global}.
Let $u$ be a smooth solution to \eqref{WaveEq}--\eqref{InitData} on 
$[0,T)\times \R^2$ for some $T>0$.
For a positive integer $k$, and positive constants $\lambda$ and $\mu$, we define
\begin{align*}
 e_{k,\lambda,\mu}[u](T):=& \sup_{(t,x)\in [0,T)\times \R^2}\bigl\{
(1+t)^{1/2} \jb{t-r}^{1-\mu}|\pa u(t,x)|\\
& \qquad\qquad\qquad  {}+(1+t)^{(1-\lambda)/2}\jb{t-r}^{1-\mu}|\pa u(t,x)|_k\bigr\},
\end{align*}
where $r=|x|$. 
Our first aim here is to prove the following,
from which the global existence part of Theorem~\ref{Global} follows:
\begin{prp}\label{AP01}
Suppose that the assumptions in Theorem~$\ref{Global}$ are fulfilled.
Let $k\ge 2$ and $0<\mu<1/10$.
If $0< 2k\lambda \le \mu/6$, then 
we can find a positive constant $m_0=m_0(k, \lambda, \mu)$ having the following
property:
For any $m\ge m_0$ 
there is a positive constant $\eps_0=\eps_0(m, k, \lambda, \mu)$
such that 
\begin{equation}
\label{Boot01}
e_{k,\lambda,\mu}[u](T)\le m\eps
\end{equation}
implies
\begin{equation}
\label{Boot-F}
e_{k,\lambda,\mu}[u](T)\le \frac{m}{2}\eps,
\end{equation}
provided that $0<\eps\le \eps_0$.
\end{prp}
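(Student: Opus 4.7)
The proof is a continuous induction: assuming the bootstrap hypothesis \eqref{Boot01}, the task is to improve the constant from $m$ to $m/2$, which, combined with local well-posedness, will deliver global existence as soon as $\varepsilon$ is small enough. The norm $e_{k,\lambda,\mu}[u](T)$ has two pieces—a sharp $L^\infty$ bound on $|\partial u|$ with weight $(1+t)^{1/2}\jb{t-r}^{1-\mu}$, and a weaker one on $|\partial u|_k$ with an extra $(1+t)^{\lambda/2}$ of slack—and I plan to handle them by different routes.

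For the higher-order part, I would apply Hörmander's $L^1$--$L^\infty$ estimate (Lemma~\ref{Ho}) to $\Gamma^\alpha u$ for $|\alpha|\le k$, whose wave equation is $\dal\Gamma^\alpha u = \widetilde\Gamma^\alpha F(\pa u)$ by \eqref{Comm01}. The source is a cubic in generalised derivatives, and by using \eqref{Boot01} to bound the lowest-order factors and invoking Lindblad's Lemma~\ref{Rewrite03} to trade $\jb{t-r}^{-1}$ for one extra $\Gamma$-derivative where needed, the source can be integrated against the kernel $\jb{\tau+|y|\rangle^{-1/2}$. The $(1+t)^{\lambda/2}$ allowance in the weight for $|\partial u|_k$ is precisely what absorbs the (poly-)logarithmic loss that this two-dimensional integration generates, and the smallness constraint $0<2k\lambda\le\mu/6$ is designed to keep the loss under control as $|\alpha|$ ranges up to $k$. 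The interior region $\{r<t/2\}$ and the initial strip $\{t\le 1\}$ are disposed of in the same step since there $\jb{t-r}\sim 1+t$ and the two weights collapse into one.

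For the sharp bound on $|\partial u|$ itself in the exterior $\Lambda_{T,R}$, I would use the ODE reduction of Section~\ref{Reduction}. Along each forward null line $\{r-t=\sigma,\ x/|x|=\omega\}$ parametrised by $t$, the function $U=D_-(r^{1/2}u)$ satisfies \eqref{Red01}; its remainder $H$ is estimated via Proposition~\ref{Pro301} in terms of $|u|_{\sharp,0}$, $|u|_1$ and $|u|_2$, which are controlled by the bootstrap hypothesis together with the bounds on $u$ itself obtained by a parallel Hörmander argument. Choosing $t_0\sim\jb{\sigma}$ as the starting time (so that \eqref{CondT0} holds, with the initial datum $U(t_0,x_0)$ bounded using the higher-order estimate on the incoming null line $\{r-t=\sigma\}$ as it enters $\Lambda_{T,R}$) and applying Lemma~\ref{ODELemma02} with $\kappa$, $\rho$ satisfying $\kappa+\rho-1=1-\mu$, one obtains $|U(t,x)|\le C\varepsilon\jb{t-r}^{\mu-1}$. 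Corollary~\ref{Rewrite02} then converts this into the desired improved bound on $|\partial u|$, which beats the bootstrap hypothesis by one power of $m$, closing the argument for $\varepsilon$ small.

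The main technical obstacle I anticipate is aligning the exponents $\rho$ and $\kappa$ of Lemma~\ref{ODELemma02} with the weight structure of $e_{k,\lambda,\mu}$: the source $H$ contains both a cubic piece of size $\sim t^{-1/2}|\pa u|^2|u|_1$ and a linear-in-$u$ correction $\sim t^{-3/2}|u|_2$, and both must be brought into the form $|J(t)|\le E_0\varepsilon\jb{\sigma}^{-\kappa}t^{-\rho}$ with $\rho$ strictly greater than $1+\mu$. The $|u|_1$ and $|u|_2$ quantities are not directly controlled by $e_{k,\lambda,\mu}$; they require a separate Hörmander step applied to $u$ rather than $\pa u$, after which the $\jb{\sigma}$-weight has to be read off carefully using the support property \eqref{Supp02}. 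Keeping this bookkeeping consistent across all intermediate orders is what forces both the restriction $\mu<1/10$ and the hierarchy $2k\lambda\le\mu/6$ in the hypothesis.
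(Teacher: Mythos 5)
Your overall architecture (bootstrap, H\"ormander plus Lindblad for the rough bound, ODE reduction along null rays for the sharp bound on $|\pa u|$) matches the paper's in outline, and your treatment of the order-zero ODE step --- the choice $t_0\sim\jb{\sigma}$, the exponents $\kappa+\rho-1=1-\mu$, the need for an auxiliary bound on $|u|_1$, $|u|_2$ to control $H$ --- is essentially correct. But there are two genuine gaps. First, you never establish energy ($L^2$) estimates, and without them the H\"ormander step cannot be run. Lemma~\ref{Ho} requires $\int_{\R^2}|F(\pa u)(\tau,y)|_{s}\,dy$ for $s$ strictly larger than the order you want to output (you lose one order to the $|\cdot|_1$ in the kernel and one more to Lindblad's lemma), so the highest-order factor in the cubic source always carries more derivatives than the bootstrap norm controls pointwise; it must be placed in $L^2$ against another $L^2$ factor. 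The paper devotes Step~1 to proving $\|\pa u(t)\|_l\le B_l\eps(1+t)^{C^*m^2\eps^2+\lambda l}$ for $l\le 2k$ by induction and Gronwall, and this is where the hypothesis $2k\lambda\le\mu/6$ is actually consumed (it keeps the cumulative energy growth below $\mu/3$); it is not there to absorb a logarithmic loss in the $L^1$--$L^\infty$ integration, as you suggest.

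Second, your plan to close the $|\pa u|_k$ component by H\"ormander plus Lindblad alone fails inside $\Lambda_{T,R}$ near the light cone. That route yields at best $|\pa u|_{2k-2}\le C\eps(1+t)^{\mu-1/2}\jb{t-r}^{-1}$ (the paper's \eqref{Kla01}), and multiplying by the weight $(1+t)^{(1-\lambda)/2}\jb{t-r}^{1-\mu}$ leaves a factor $(1+t)^{\mu-\lambda/2}\jb{t-r}^{-\mu}$, which is unbounded where $\jb{t-r}\sim 1$ because $\lambda\le\mu/(4k)\ll\mu$. This estimate only closes the bootstrap on $\Lambda_{T,R}^{\mathrm c}$, where $\jb{t-r}\sim\jb{t+r}$. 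Inside $\Lambda_{T,R}$ the paper must run the null-ray ODE argument not just for $U$ but for all $U^{(\alpha)}$ with $|\alpha|\le k$, using the linearized equation \eqref{Red02}; since the resulting system is not of the dissipative form treated by Lemma~\ref{ODELemma02}, one instead uses $\realpart F(\hat\omega)\ge 0$ only to bound the quadratic form by $2C_*\eps^2 t^{-1}|V^{(\alpha)}_{\sigma,\omega}|^2$ and runs an induction on $|\alpha|$ with Gronwall, producing a small growth $t^{3^{k-1}C_*\eps^2}$ that is finally absorbed by the $(1+t)^{\lambda/2}$ slack after shrinking $\eps$. That induction, and the role of $\lambda$ in absorbing $t^{O(\eps^2)}$ rather than a log, is the piece of the argument your proposal is missing.
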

%
{\it Proof.}
Assume that \eqref{Boot01} is satisfied.
In the following we always suppose that $0\le t<T$.
We also suppose that $m\ge 1$, and that $\eps$ is small enough to satisfy $m\eps \le 1$.
The letter $C$ in this proof stands for a positive constant which may depend
on $k$, $\lambda$, and $\mu$, but are independent of $m$, $\eps$, and $T$.
The proof is divided into several steps.
\medskip

{\bf Step 1: The energy estimates.}
For $l\le 2k$, it follows from \eqref{Boot01} that
\begin{align}
|F(\pa u)|_{l}\le & C\left(|\pa u|^2|\pa u|_l+|\pa u|_k^2|\pa u|_{l-1}\right)
\nonumber\\
\le & \frac{C^*}{2}m^2\eps^2(1+t)^{-1}|\pa u|_l+Cm^2\eps^2(1+t)^{\lambda-1}|\pa u|_{l-1}
\label{Boot02}
\end{align}
with a positive constant $C^*=C^*(k,\lambda,\mu)$,
where terms including $|\pa u|_{l-1}$ should be neglected if $l=0$. 

From the energy inequality and \eqref{Boot02} with $l=0$, we get
$$
\|\pa u(t)\|_{0}\le C\eps+C^* m^2\eps^2\int_0^t (1+\tau)^{-1}\|\pa u(\tau)\|_{0} d\tau.
$$
Gronwall's lemma implies
\begin{equation}
\|\pa u(t)\|_{0}\le C\eps (1+t)^{C^*m^2\eps^2}.
\label{Ind00}
\end{equation}
We are going to prove that there are positive constants $B_l=B_l(k, \lambda, \mu)$
for $0\le l\le 2k$ such that
\begin{equation}
\label{Ind}
\|\pa u(t)\|_l \le B_l \eps (1+t)^{C^*m^2\eps^2+\lambda l}
\end{equation}
for $0\le l\le 2k$.
Indeed \eqref{Ind} for $l=0$ follows from \eqref{Ind00}.
Suppose that \eqref{Ind} is true for some $0\le l\le 2k-1$. 
Then, by \eqref{Boot02} we get
\begin{align*}
\|\pa u(t)\|_{l+1}\le & C\eps+C^*m^2\eps^2\int_0^t  (1+\tau)^{-1}\|\pa u(\tau)\|_{l+1} d\tau\\
& {}+CB_l m^2\eps^3\int_0^t (1+\tau)^{C^*m^2\eps^2+\lambda(l+1)-1}d\tau.
\end{align*}
Then Gronwall's lemma yields
\begin{align*}
\|\pa u(t)\|_{l+1}\le C\eps (1+t)^{C^*m^2\eps^2}+C\frac{B_l}{\lambda(l+1)}m^2\eps^3(1+t)^{C^*m^2\eps^2+\lambda(l+1)},
\end{align*}
which inductively implies the desired result because $m\eps\le 1$.
\medskip

{\bf Step 2: Decay estimates of generalized derivatives of higher order.}
We suppose that $\eps$ is so small to satisfy 
$C^*m^2\eps^2 \le \mu/6$, where the constant $C^*$ is from \eqref{Ind}. 
Then \eqref{Ind} for $l=2k$ implies
\begin{equation}
\|\pa u(t)\|_{2k}\le C\eps (1+t)^{\mu/3},
\label{Hoe00}
\end{equation}
because we have assumed $2k\lambda\le \mu/6$.
\eqref{Hoe00} yields
\begin{align*}
\int_{\R^2}
|F(\pa u)(\tau,y)|_{2k}dy \le & Cm\eps (1+\tau)^{(\lambda-1)/2}\|\pa u(\tau)\|_{2k}^2
\\
\le & Cm\eps^3(1+\tau)^{\mu-(1/2)} \le C\eps (1+\tau)^{\mu-(1/2)}, 
\end{align*}
because $\lambda/2<\mu/3$.
It is well known that we have 
$|u_0(t,x)|_{2k-1}\le C\eps(1+t)^{-1/2}$
for the solution $u_0$ to $\dal u_0=0$ with initial data $(u_0,\pa_t u_0)
=(\eps f, \eps g)$ at $t=0$ (see \cite{Hoe97} for instance).
Hence, by Lemma~\ref{Ho} we get
\begin{align}
(1+t)^{1/2}|u(t,x)|_{2k-1}\le & C\eps+C\int_0^t \int_{\R^2} \frac{|F(\pa u)(\tau, y)|_{2k}}{(1+\tau)^{1/2}}dy d\tau\nonumber\\
\le & C\eps(1+t)^{\mu}
\label{Hoe01}
\end{align}
for $(t,x)\in [0,T)\times \R^2$.
From Lemma~\ref{Rewrite03} we obtain
\begin{equation}
\label{Kla01}
|\pa u(t,x)|_{2k-2}\le C\jb{t-r}^{-1}|u(t,x)|_{2k-1}\le C\eps (1+t)^{\mu-(1/2)}\jb{t-r}^{-1}
\end{equation}
for $(t, x)\in [0,T)\times \R^2$.


Suppose that $R$ is a positive number to satisfy \eqref{Supp01},
so that we have \eqref{Supp02}. Recall the definition \eqref{DefLamTR} of $\Lambda_{T,R}$.
We put $\Lambda_{T,R}^{\rm c}:=\bigl([0,T)\times \R^2\bigr)\setminus \Lambda_{T,R}$.

If we have either $t/2<1$ or $r< t/2$, then we get 
$$
\jb{t-r}^{-1}\le C\jb{t+r}^{-1}.
$$
If $r>t+R$, then we have $u(t,x)=0$ by \eqref{Supp02}.
Hence \eqref{Kla01} implies
\begin{equation}
\label{BF01}
\sup_{(t,x)\in \Lambda_{T,R}^c}(1+t)^{1/2}\jb{t-r}^{1-\mu}|\pa u(t,x)|_{2k-2}\le C\eps.
\end{equation}
\medskip

{\bf Step 3: Decay estimates for generalized derivatives of lower order.}
We suppose that $(t,x)=(t,r\omega)\in \Lambda_{T,R}$ throughout this step.
Recall that $t^{-1}$, $r^{-1}$, $(1+t)^{-1}$, and $\jb{t+r}^{-1}$ are equivalent to each other.
We define $U$, $U^{(\alpha)}$, $H$, and $H_\alpha$ as in Section~\ref{Reduction}.

By \eqref{Hoe01} and \eqref{Kla01}, we get
\begin{equation}
\label{E501}
|u(t,x)|_{\sharp,2k-2}\le C\eps t^{\mu-(1/2)}\jb{t-r}^{-1},
\end{equation}
where $|u(t,x)|_{\sharp, 2k-2}$ is defined by \eqref{Basis300}.
Hence we obtain from \eqref{Basic301} that 
$$
\sum_{|\alpha|\le 2k-2}|U^{(\alpha)}(t,x)|\le C\eps t^{\mu}\jb{t-r}^{-1}.
$$
If $t/2=r$ or $t/2=1$, then we have 
$t^{\mu}\le C\jb{t-r}^{\mu}$,
and we obtain 
\begin{equation}
\label{BF03}
\sum_{|\alpha|\le 2k-2}|U^{(\alpha)}(t,x)|\le C\eps \jb{t-r}^{\mu-1}
\text{ when either $t/2=r$ or $t/2=1$}.
\end{equation}

By Corollary~\ref{Rewrite02} and \eqref{Hoe01}, we get
\begin{align}
\label{f401}
t^{1/2}|\pa u|_l\le & C \sum_{|\alpha|\le l} |r^{1/2}\pa \Gamma^\alpha u| 
\le C\sum_{|\alpha|\le l} |U^{(\alpha)}|+C
\eps t^{\mu-1},\quad l\le 2k-2.
\end{align}

Recall that $0<\mu<1/10$.
By \eqref{Hoe01}, \eqref{E501}, and Proposition~\ref{Pro301} we get
\begin{equation}
|H(t,x)|\le C\left(\eps^3 t^{3\mu-2}\jb{t-r}^{-2}
{}+\eps t^{\mu-2}\right)
\le C\eps t^{3\mu-2}\jb{t-r}^{-2\mu}.
 \label{E502}
\end{equation}

We define $t_{0, \sigma}=\max\{2, -2\sigma\}$ for $\sigma\le R$.
Note that the line segment $\{(t, (t+\sigma)\omega);\, 0\le t< T\}$, with $\sigma\le R$ and $\omega\in \Sph^1$ being fixed, meets the boundary of $\Lambda_{T,R}$
at the point $\left(t_{0, \sigma}, (t_{0,\sigma}+\sigma)\omega\right)$. 
We have
\begin{equation}
\label{517}
(1+R)^{-1}\jb{\sigma} \le t_{0,\sigma}\le 2(1+|\sigma|)\le 2\sqrt{2} \jb{\sigma},\quad \sigma\le R.
\end{equation}

We define
$$
V_{\sigma,\omega}(t)=U\bigl(t, (t+\sigma)\omega\bigr)
$$
for $0\le t\le T$, $\sigma\le R$, and $\omega\in \Sph^1$.
Then \eqref{Red01} leads to
\begin{equation}
\label{Red01a}
V_{\sigma, \omega}'(t)= -\frac{1}{2t}F(\hat{\omega})|V_{\sigma,\omega}(t)|^2V_{\sigma, \omega}(t)+H(t, (t+\sigma)\omega)
\end{equation}
for $t_{0,\sigma}\le t<T$.
Note that by \eqref{BF03} we have
\begin{equation}
\label{BF04}
|V_{\sigma, \omega}(t_{0,\sigma})|
\le C\eps \jb{\sigma}^{\mu-1}.
\end{equation}
for $\sigma\le R$ and $\omega\in \Sph^1$.
By \eqref{E502}, \eqref{517}, and \eqref{BF04}, we can apply 
Lemma~\ref{ODELemma02} to \eqref{Red01a} (with $\rho=2-3\mu$, and $\kappa=2\mu$):
\eqref{ODEBound}  implies that
\begin{equation}
\label{E409}
|V_{\sigma, \omega}(t)|\le C\eps \jb{\sigma}^{\mu-1},\quad t\ge t_{0,\sigma},
\end{equation}
where $C$ is a constant independent of $\sigma$ and $\omega$
(note that $0<\mu<1/10<\rho-1$ and $\kappa+\rho-1=1-\mu$).
Now we get $|U(t,r\omega)|=|V_{r-t,\omega}(t)|\le C\eps\jb{t-r}^{\mu-1}$,
and with the help of \eqref{f401} for $l=0$ we obtain
\begin{equation}
\label{f410}
\sup_{(t,x)\in \Lambda_{T,R}} (1+t)^{1/2}\jb{t-r}^{1-\mu} |u(t,x)|
\le C\eps.
\end{equation}

Let $|\alpha|\le k$.
For a nonnegative integer $s$, we set 
$$
{\mathcal U}^{(s)}(t,x):=\sum_{|\alpha|\le s}|U^{(\alpha)}(t,x)|.
$$
By \eqref{f401} we get
\begin{equation}
 |\pa u(t,x)|_{|\alpha|-1}\le C\left(t^{-1/2}{\mathcal U}^{(|\alpha|-1)}(t,x)+\eps t^{\mu-(3/2)}\right).
 \label{B410}
\end{equation}
We obtain from \eqref{E501}, \eqref{B410}, and Proposition~\ref{Pro301} that
\begin{align}
|H_\alpha(t,x)|\le & C\left(
 \eps^3 t^{3\mu-2}\jb{t-r}^{-2}
{}+\eps t^{\mu-2}+\eps^3t^{3\mu-4}+t^{-1}
\bigl({\mathcal U}^{(|\alpha|-1)}(t,x)\bigr)^3\right),
\nonumber\\
\le & C\eps t^{3\mu-2}\jb{t-r}^{-2\mu}+Ct^{-1}\bigl({\mathcal U}^{(|\alpha|-1)}(t,x)\bigr)^3.
\label{f421}
\end{align} 

We put
$$
V^{(\alpha)}_{\sigma, \omega}(t)=U^{(\alpha)}\bigl(t, (t+\sigma)\omega\bigr)
$$
for $0\le t< T$, $\sigma\le R$, and $\omega\in \Sph^1$.
From \eqref{Red02} we get
\begin{align*}
\bigl(V^{(\alpha)}_{\sigma,\omega}\bigr)'(t)=& -\frac{F(\hat{\omega})}{2t}
\left(2|V_{\sigma,\omega}(t)|^2 V^{(\alpha)}_{\sigma,\omega}(t)
+\bigl(V_{\sigma,\omega}(t)\bigr)^2\overline{V^{(\alpha)}_{\sigma,\omega}(t)}\right)\\
&+H_\alpha(t, (t+\sigma) \omega)
\end{align*}
for $t_{0,\sigma}\le t<T$.
Hence by \eqref{BF04} and \eqref{f421} we obtain
\begin{align*}
\frac{d}{dt}|V^{(\alpha)}_{\sigma,\omega}(t)|^2=& 
 -\realpart\left(\frac{F(\hat{\omega})}{t}
\left(2|V_{\sigma,\omega}(t)|^2 |V^{(\alpha)}_{\sigma,\omega}(t)|^2
+\bigl(V_{\sigma,\omega}(t)\bigr)^2\bigl(\overline{V^{(\alpha)}_{\sigma,\omega}(t)}\bigr)^2\right)
\right)\\
&+2\realpart\left(H_\alpha(t, (t+\sigma)\omega)
\overline{V^{(\alpha)}_{\sigma,\omega}}(t)\right)\\
\le & \frac{2C_*\eps^2}{t} |V^{(\alpha)}_{\sigma,\omega}(t)|^2
{}+C\left(\eps t^{3\mu-2}\jb{\sigma}^{-2\mu}+t^{-1}\bigl({\mathcal V}_{\sigma, \omega}^{(|\alpha|-1)}(t)\bigr)^3 \right)\,|V^{(\alpha)}_{\sigma,\omega}(t)|
\end{align*}
for $t_{0,\sigma}\le t<T$, where ${\mathcal V}_{\sigma,\omega}^{(s)}(t):=\sum_{|\alpha|\le s}|V_{\sigma, \omega}^{(\alpha)}(t)|$, and $C_*=C_*(k,\lambda, \mu)$ is a positive constant.
Therefore it follows from \eqref{BF03} that
\begin{align*}
t^{-C_*\eps^2}|V^{(\alpha)}_{\sigma,\omega}(t)|
\le & t_{0,\sigma}^{-C_*\eps^2}
|V^{(\alpha)}_{\sigma,\omega}(t_{0,\sigma})|
{}+C\eps \jb{\sigma}^{-2\mu} \int_{t_{0,\sigma}}^t \tau^{-C_*\eps^2+3\mu-2}d\tau
\\
&{}+C\int_{t_{0,\sigma}}^t \tau^{-C_*\eps^2-1}
\bigl({\mathcal V}_{\sigma, \omega}^{(|\alpha|-1)}(\tau)\bigr)^3 d\tau\\
\le & C\eps\jb{\sigma}^{\mu-1-C_*\eps^2}+C\int_{t_{0,\sigma}}^t \tau^{-C_*\eps^2-1}
\bigl({\mathcal V}_{\sigma, \omega}^{(|\alpha|-1)}(\tau)\bigr)^3 d\tau
\end{align*}
for $t_{0,\sigma}\le t <T$,
which leads to
$$
t^{-C_*\eps^2}{\mathcal V}^{(l)}_{\sigma,\omega}(t)
\le C\eps\jb{\sigma}^{\mu-1}+C\int_{t_{0,\sigma}}^t \tau^{-C_*\eps^2-1}
\bigl({\mathcal V}_{\sigma, \omega}^{(l-1)}(\tau)\bigr)^3 d\tau
$$
for $t_{0,\sigma}\le t<T$ and $1\le l \le k$.
Using this inequality, we are going to prove that
\begin{align}
\label{BF}
{\mathcal V}_{\sigma, \omega}^{(l)}(t)
\le \widetilde{B}_l \eps t^{3^{l-1} C_* \eps^2} \jb{\sigma}^{\mu-1} 
\end{align}
for $t_{0,\sigma}\le t < T$ and $1\le l\le k$
with some positive constant $\widetilde{B}_l=\widetilde{B}_l(k,\lambda, \mu)$.
By \eqref{E409} 
we have ${\mathcal V}_{\sigma, \omega}^{(0)}(t)\le \widetilde{B}_0\eps\jb{\sigma}^{\mu-1}$
with a positive constant $\widetilde{B}_0=\widetilde{B}_0(k,\lambda,\mu)$.
Hence we get
\begin{align*}
t^{-C_*\eps^2}{\mathcal V}^{(1)}_{\sigma,\omega}(t)
\le & C\eps\jb{\sigma}^{\mu-1}+C\widetilde{B}_0^3\eps^3\jb{\sigma}^{3\mu-3}\int_{2}^\infty \tau^{-C_*\eps^2-1} d\tau\le \widetilde{B}_1\eps\jb{\sigma}^{\mu-1}
\end{align*}
with $\widetilde{B}_1=C+CC_*^{-1}\widetilde{B}_0^3$,
which leads to 
$$
{\mathcal V}_{\sigma, \omega}^{(1)}(t)\le \widetilde{B}_1 \eps \jb{\sigma}^{\mu-1} t^{C_*\eps^2},
$$
and \eqref{BF} for $l=1$ is shown.
Next, suppose that \eqref{BF} is true for some $l$ with $1\le l \le k-1$.
Then we get
\begin{align*}
t^{-C_*\eps^2}{\mathcal V}^{(l+1)}_{\sigma,\omega}(t)
\le & C\eps\jb{\sigma}^{\mu-1}+C\widetilde{B}_l^3 \eps^3\jb{\sigma}^{3\mu-3}
\int_{2}^t \tau^{(3^{l}-1)C_*\eps^2-1} d\tau\\
\le & \widetilde{B}_{l+1}\eps \jb{\sigma}^{\mu-1} t^{(3^{l}-1)C_*\eps^2}
\end{align*}
with $\widetilde{B}_{l+1}=C+CC_*^{-1} (3^{l}-1)^{-1}\widetilde{B}_l^{3}$,
and we obtain
$$
{\mathcal V}_{\sigma, \omega}^{(l+1)}(t) \le \widetilde{B}_{l+1}\eps \jb{\sigma}^{\mu-1}
t^{3^{l}C_*\eps^2},
$$
which is \eqref{BF} with $l$ replaced by $l+1$.
Now \eqref{BF} for $1\le l \le k$ is established.

By \eqref{BF}, we obtain
$$
\sum_{|\alpha|\le k}|U^{(\alpha)}(t,x)|\le C\eps (1+t)^{3^{k-1}C_*\eps^2}\jb{t-r}^{\mu-1},\quad (t,x)\in \Lambda_{T,R},
$$
which yields 
\begin{equation}
\label{BF404}
|\pa u(t,x)|_k\le C\eps (1+t)^{3^{k-1}C_*\eps^2-(1/2)}\jb{t-r}^{\mu-1},\quad (t,x)\in \Lambda_{T,R}
\end{equation}
with the help of \eqref{f401}.
If we choose sufficiently small $\eps$ to satisfy
$$
3^{k-1}C_*\eps^2\le \lambda/2,
$$
it follows from \eqref{BF404} that
\begin{equation}
\label{BFF}
\sup_{(t,x)\in \Lambda_{T, R}} (1+t)^{(1-\lambda)/2}\jb{t-r}^{1-\mu}|\pa u(t,x)|_k\le C\eps.
\end{equation}
\medskip

{\bf The final step.}
By \eqref{BF01}, \eqref{f410}, and \eqref{BFF}, 
we see that there exist two positive constants $\eps_0=\eps_0(m, k, \lambda, \mu)$
and $C_0=C_0(k, \lambda, \mu)$
such that
\begin{equation}
\label{QQ}
e_{k,\lambda, \mu}[u](T)\le C_0\eps
\end{equation}
for $0<\eps\le \eps_0$ (note that we have $2k-2\ge k$ for $k\ge 2$).
If $m\ge \max\{2C_0, 1\}$, then \eqref{QQ} implies \eqref{Boot-F} immediately.
\qed
\medskip

We are in a position to prove Theorem~\ref{Global}.
\begin{proof}[Proof of Theorem~$\ref{Global}$]
Let the assumptions in Theorem~\ref{Global} be fulfilled.
Suppose that $u$ is a local solution
to \eqref{WaveEq}--\eqref{InitData} on $[0,T)\times \R^2$ for some $T>0$.
We fix $\lambda$ and $k$ as in Proposition~\ref{AP01}.
We also fix $m$ satisfying $m\ge m_0$ and
$$
\sup_{x\in \R^2}\jb{r}^{1-\mu}(|\pa u(t,x)|+|\pa u(t,x)|_k)\bigr|_{t=0}
\le \frac{m}{2}{\eps}, \quad \eps>0,
$$
where $m_0=m_0(k, \lambda, \mu)$ is from Proposition~\ref{AP01}.
Let $\eps_0=\eps_0(m, k, \lambda, \mu)$ also be from Proposition~\ref{AP01}.
We put
$$
T_*=\sup\left\{t\in [0,T)\,; e_{k, \lambda, \mu}(t)\le m\eps  \right\}.
$$
By the choice of $m$, we have $T_*>0$. Moreover we get $T_*=T$
for $0<\eps\le \eps_0$,
because if $T_*<T$ then Proposition~\ref{AP01} implies $e_{k,\lambda,\mu}(T_*)<m\eps/2$
for $0<\eps\le \eps_0$,
and we obtain from the continuity of $e_{k,\lambda,\mu}$ that 
$e_{k,\lambda,\mu}(t_*)\le m\eps$ for some $t_*>T_*$, 
which contradicts the definition of $T_*$. 
Therefore we see that $e_{k,\lambda, \mu}(t)$ cannot exceed $m\eps$ as long as the solution $u$ exists.
This {\it a priori} estimate and the local existence theorem implies the
global existence of the solution $u$. We also see that \eqref{Boot01} holds 
for some large $m$.

Now we turn our attention to the asymptotic behavior.
Because we have \eqref{Boot01},
the estimates in the proof of Proposition~\ref{AP01} are valid.
We go back to \eqref{Red01a}, and apply Lemma~\ref{ODELemma02} 
with $z(t)=V_{\sigma,\omega}(t)$, $K=F(\hat\omega)$, 
$J(t)=H(t,(t+\sigma)\omega)$, $t_0=t_{0,\sigma}$
for each fixed $(\sigma, \omega)\in (-\infty, R]\times \Sph^1$
(note that we can take $\rho=2-3\mu$ and $\kappa=2\mu$
because of \eqref{E502}, \eqref{517}, and \eqref{BF04}).  
Then we see that there exists $P_0=P_0(\sigma, \omega)$ such that
\begin{align}
 \label{B601}
& |V_{\sigma, \omega}(t)-P(\log t, \sigma, \omega)|\le C\eps t^{4\mu-1}
\jb{\sigma}^{-3\mu}, \quad t\ge t_{0,\sigma},\\
  \label{B602}
& |P_0(\sigma, \omega)|\le C\eps \jb{\sigma}^{\mu-1},
\end{align}
where $P=P(\tau, \sigma, \omega)$ is the solution to \eqref{ode},
and $C$ is a constant independent of $\sigma$ and $\omega$.
Recalling that $D_-\bigl(r^{1/2} u(t,x)\bigr)=U(t,x)=V_{r-t,\omega}(t)$
with $r=|x|$ and $\omega=|x|^{-1}x$,
we obtain from \eqref{B601} that
\begin{equation}
\label{B603}
|D_-\bigl(r^{1/2}u(t,x)\bigr)
{}-P(\log t, r-t, \omega)|\le C\eps  t^{4\mu-1}\jb{t-r}^{-3\mu}
\end{equation}
for $(t,x)\in \Lambda_{T,R}$.
By Corollary~\ref{Rewrite02} and \eqref{Hoe01} we get
\begin{equation}
\label{B604}
\bigl|r^{1/2} \pa u(t,x)-\hat{\omega}(x) D_-\bigl(r^{1/2}u(t,x)\bigr)\bigr|\le C\eps (1+t)^{\mu-1},
\end{equation}
where $\hat{\omega}(x)=(-1, x/|x|)$.
Since \eqref{B602} and \eqref{ExpS1} yield 
$|P(\tau,\sigma, \omega)|\le C\eps \jb{\sigma}^{\mu-1}$,
we have
\begin{align}
|(r^{-1/2}-t^{-1/2})P(\log t, r-t, \omega)|\le & C\eps \frac{|t-r|}{\sqrt{t}\sqrt{r}(\sqrt{t}+\sqrt{r})}\jb{t-r}^{\mu-1}
\nonumber\\
\le &
C\eps t^{-3/2}\jb{t-r}^{\mu}
\label{B605}
\end{align}
for $(t,x)\in \Lambda_{T,R}$.

To sum up the estimates \eqref{B603}, \eqref{B604} and \eqref{B605}, 
we arrive at
\begin{equation}
\label{B606}
|\pa u(t,x)
{}-\hat{\omega}(x) t^{-1/2}P(\log t, r-t, \omega)|
\le C\eps t^{4\mu-(3/2)}\jb{t-r}^{-3\mu}
\end{equation}
for $(t,x)\in \Lambda_{T,R}$. 
In order to extend \eqref{B606} outside of 
$\Lambda_{T,R}$, we just have to extend the definition of $P_0$ by setting 
$P_0(\sigma, \omega)=0$ for $\sigma>R$. 
Indeed, both $u(t,x)$ and $P(\log t, r-t, \omega)$ vanish for $|x|\ge t+R$.
On the other hand, if $r<t/2$ or $1< t<2$, we have
\begin{equation}
|\pa u(t,x)|\le C\eps t^{-1/2}\jb{t-r}^{\mu-1}\le C\eps t^{\mu-(3/2)},
\end{equation}
and
\begin{align}
t^{-1/2}|P(\log t, r-t, \omega)| \le & C\eps t^{-1/2}\jb{t-r}^{\mu-1}
\le C\eps t^{\mu-(3/2)}.
\end{align}
Therefore \eqref{B606} is valid for all $(t,x)\in [1, \infty)\times \R^2$,
which shows \eqref{Concl01}.
This completes the proof.
\end{proof}
%
\section{Proof of Corollary~\ref{EnergyDecay}}\label{Omake}
Let the assumptions of Corollary~\ref{EnergyDecay} be fulfilled.
Suppose that we have \eqref{Supp01} for some $R>0$. 
Then we get \eqref{Supp02}.
We always assume $t\ge 2$ throughout this proof.
Since we have
$R+|t-r|\le \sqrt{2}(1+R)\jb{t-r}$,
\eqref{DecayDamping} leads to
\begin{equation}
\label{DecayDamping02}
|\pa u(t,x)|\le C t^{-1/2} \min\{(\log t)^{-1/2}, \eps(R+|t-r|)^{\mu-1}\},
\end{equation}
where $r=|x|$.

We put $m_\eps(t):=\eps^{1/(1-\mu)}(\log t)^{1/(2-2\mu)}$.
Here $m_\eps(t)$ is chosen so that we have
$\eps (R+t-r)^{\mu-1}=\eps m_\eps(t)^{\mu-1}=(\log t)^{-1/2}$ for
$r=t+R-m_\eps(t)$.
For small $\eps>0$ we have $0<m_\eps(t)<t$, and we get 
$0<t+R-m_\eps (t)\le t+R$.
Then it follows from \eqref{Supp02} that
$$
\|u(t)\|_E^2=\frac{1}{2}\int_{|x|\le t+R} |\pa u(t,x)|^2 dx=I_1+I_2,
$$
where
$$
I_1=\frac{1}{2}\int_{|x|\le t+R-m_\eps(t)} |\pa u(t,x)|^2 dx,\quad I_2=\frac{1}{2}\int_{t+R-m_\eps(t)\le |x|\le t+R}
|\pa u(t,x)|^2 dx.
$$
Note that we have $t^{-1}r\le t^{-1}(t+R)\le 1+R/2$ for $0\le r\le t+R$.
Hence, switching to the polar coordinate, we get from \eqref{DecayDamping02} that
\begin{align*}
I_1 \le & C\eps^2 \int_0^{t+R-m_\eps(t)} t^{-1} (R+|t-r|)^{2\mu-2} r dr\\
\le & C\eps^2 \int_0^{t+R-m_\eps(t)} (R+t-r)^{2\mu-2}dr\le C\eps^2m_\eps(t)^{2\mu-1}
=C\eps^{\frac{1}{1-\mu}} (\log t)^{-\frac{1-2\mu}{2-2\mu}}.
\end{align*}
Similarly it follows from \eqref{DecayDamping02} that
\begin{align*}
I_2 \le & C \int_{t+R-m_\eps(t)}^{t+R} t^{-1}(\log t)^{-1} r dr
\\
\le & C(\log t)^{-1} \int_{t+R-m_\eps(t)}^{t+R} dr
= 
C\eps^{\frac{1}{1-\mu}} (\log t)^{-\frac{1-2\mu}{2-2\mu}}.
\end{align*}
This completes the proof. \qed
\medskip
\section*{Acknowledgments}
The first author (S.~K.) is supported 
by Grant-in-Aid for Scientific Research (C) (No.~23540241),
JSPS. The third author (H.~S.) is supported by Grant-in-Aid for Young 
Scientists~(B) (No.~22740089), MEXT.

\end{document}